 \newtheorem{thm}{Theorem}[section]
\newtheorem{lem}[thm]{Lemma}
 \newtheorem{prop}[thm]{Proposition}
\newtheorem{cor}[thm]{Corollary}
\newcommand{\A}{{\mathcal A}}
\title{A strengthened inequality of Alon-Babai-Suzuki's conjecture on set systems with restricted intersections modulo $p$}
\author{Wang Xin$^{\text{a}}$, Hengjia Wei$^{\text{b}}$ and Gennian Ge$^{\text{b,c,}}$\thanks{Corresponding author. Email address:  11235062@zju.edu.cn (X. Wang), ven0505@163.com (H. Wei), gnge@zju.edu.cn (G. Ge).}\\
  \footnotesize $^{\text{a}}$ School of Mathematical Sciences, Zhejiang University, Hangzhou 310027, Zhejiang, China\\
  \footnotesize $^{\text{b}}$ School of Mathematical Sciences, Capital Normal University, Beijing, 100048, China\\
\footnotesize $^{\text{c}}$ Beijing Center for Mathematics and Information Interdisciplinary Sciences, Beijing, 100048, China}
\begin{document}
\date{}
\maketitle

\begin{abstract}
Let $K=\{k_1,k_2,\ldots,k_r\}$ and $L=\{l_1,l_2,\ldots,l_s\}$ be disjoint subsets of $\{0,1,\ldots,p-1\}$, where $p$ is a prime and $\A=\{A_1,A_2,\ldots,A_m\}$ be a family of subsets of $[n]$ such that $|A_i|\pmod{p}\in K$ for all $A_i\in \A$ and $|A_i\cap A_j|\pmod{p}\in L$ for $i\ne j$. In 1991, Alon, Babai and Suzuki conjectured that if $n\geq s+\max_{1\leq i\leq r} k_i$, then $|\A|\leq {n\choose s}+{n\choose s-1}+\cdots+{n\choose s-r+1}$. In 2000, Qian and Ray-Chaudhuri proved the conjecture under the condition $n\geq 2s-r$. In 2015, Hwang and Kim verified the conjecture of Alon, Babai and Suzuki.

In this paper, we will prove that if  $n\geq 2s-2r+1$ or $n\geq s+\max_{1\leq i\leq r}k_i$, then
\[
|\A|\leq{n-1\choose s}+{n-1\choose s-1}+\cdots+{n-1\choose s-2r+1}.
\]
This result strengthens the upper bound of Alon, Babai and Suzuki's conjecture when $n\geq 2s-2$.
\end{abstract}

\section{Introduction}
A family $\A$ of subsets of $[n]$ is called {\it intersecting} if every pair of distinct subsets $A_i, A_j\in \A$ have a nonempty intersection.
Let $L$ be a set of $s$ nonnegative integers. A family $\A$ of subsets of $[n]=\{1,2,\ldots,n\}$ is {\it $L$-intersecting} if $|A_i\cap A_j|\in L$ for every pair of distinct subsets $A_i,A_j\in\A$. A family $\A$ is {\it $k$-uniform} if it is a collection of $k$-subsets of $[n]$.  Thus, a $k$-uniform intersecting family is $L$-intersecting for $L=\{1,2,\ldots,k-1\}$.

The following is an intersection theorem of de Bruijin and Erd\"{o}s \cite{BE}.

\begin{thm}[de Bruijin and Erd\"{o}s, 1948 \cite{BE}] If $\A$ is a family of subsets of $[n]$ satisfying $|A_i \cap A_i| = 1$ for every pair of distinct subsets $A_i, A_j\in \A$,
then $|\A|\leq n$.
\end{thm}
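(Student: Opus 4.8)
The plan is to prove this by the linear algebra (Gram matrix) method, which fits the toolkit used throughout the rest of the paper. I would identify each $A_i$ with its characteristic vector $v_i\in\mathbb{R}^n$, so that $\langle v_i,v_j\rangle=|A_i\cap A_j|$; proving that $v_1,\dots,v_m$ are linearly independent immediately yields $|\A|=m\le n$, the dimension of the ambient space.

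Writing $d_i=|A_i|$ and assuming $m\ge 2$ (so that no $A_i$ is empty, since an empty set would meet another set in $0\ne 1$ points), the Gram matrix $M=(\langle v_i,v_j\rangle)_{i,j}$ satisfies $M_{ii}=d_i$ and $M_{ij}=1$ for $i\ne j$, that is
\[
M=\mathrm{diag}(d_1-1,\dots,d_m-1)+J,
\]
where $J=\mathbf{1}\mathbf{1}^{\top}$ is the all-ones $m\times m$ matrix. Since $M=VV^{\top}$ for the $m\times n$ incidence matrix $V$ of $\A$, one has $\operatorname{rank}(M)=\operatorname{rank}(V)$, so it suffices to prove that $M$ is nonsingular.

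The principal case is when every set has at least two elements. Then $\mathrm{diag}(d_i-1)$ is positive definite and $J$ is positive semidefinite, so their sum $M$ is positive definite and hence nonsingular; the resulting linear independence of the $v_i$ gives $m\le n$.

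The step I expect to be the real obstacle is the degenerate case of a singleton $A_i$ with $d_i=1$, for then a diagonal entry vanishes and the positive-definiteness argument collapses. I would settle this combinatorially rather than algebraically: two distinct singletons would intersect in the empty set, contradicting the hypothesis, so $\A$ contains at most one singleton. If $A_1=\{x\}$ is that singleton, the condition $|A_1\cap A_j|=1$ forces $x\in A_j$ for all $j\ge 2$, and since any two of $A_2,\dots,A_m$ already share $x$ while meeting in exactly one point, we get $A_i\cap A_j=\{x\}$ for $2\le i<j\le m$. Consequently the sets $A_j\setminus\{x\}$ (for $j\ge 2$) are nonempty and pairwise disjoint subsets of $[n]\setminus\{x\}$, whence $m-1\le n-1$, and again $|\A|\le n$.
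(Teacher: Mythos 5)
Your proof is correct, but note that the paper itself offers no proof to compare against: this theorem appears only as cited background in the introduction (attributed to de Bruijn and Erd\H{o}s \cite{BE}), so the only meaningful comparison is with the classical arguments. Your route is the standard linear-algebra proof of the nonuniform Fisher inequality --- in fact it is essentially Bose's method from \cite{Bose}, the very technique whose descendants (Frankl--Wilson, Alon--Babai--Suzuki, and the polynomial-space arguments of Sections 2 and 3) power the rest of the paper, so it fits the paper's toolkit well, whereas de Bruijn and Erd\H{o}s's original argument was purely combinatorial (an induction exploiting a point of minimum degree, with no linear algebra). Your handling of the two genuine subtleties is sound: writing $M=\mathrm{diag}(d_1-1,\ldots,d_m-1)+J$ and invoking positive definiteness requires $d_i\geq 2$ for all $i$, and you correctly isolate the degenerate case of a singleton $A_1=\{x\}$, where the hypothesis forces $x\in A_j$ and then $A_i\cap A_j=\{x\}$ for all $2\leq i<j\leq m$, so that the sets $A_j\setminus\{x\}$ are nonempty and pairwise disjoint in $[n]\setminus\{x\}$, giving $m-1\leq n-1$. (This degenerate configuration is exactly the near-pencil, one of the extremal families, so it cannot be argued away.) Two trivia worth recording if you polish this: the reduction $\operatorname{rank}(M)=\operatorname{rank}(VV^{\top})=\operatorname{rank}(V)\leq n$ is valid over $\mathbb{R}$ precisely because $V$ is real, and your standing assumption $m\geq 2$ silently disposes of the trivial case $m\leq 1\leq n$, which you may as well state explicitly.
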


A year later, Bose \cite{Bose} obtained the following more general intersection theorem which requires the
intersections to have exactly $\lambda$ elements.

\begin{thm}[Bose, 1949 \cite{Bose}] If $\A$ is a family of subsets of $[n]$ satisfying $|A_i \cap A_i| = \lambda$ for every pair of distinct subsets $A_i, A_j\in \A$,
then $|\A|\leq n$.
\end{thm}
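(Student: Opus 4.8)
The plan is to use the linear-algebra (incidence-vector) method over $\mathbb{R}$. To each set $A_i\in\A$ I associate its characteristic vector $v_i\in\{0,1\}^n\subseteq\mathbb{R}^n$, whose $k$-th coordinate is $1$ exactly when $k\in A_i$. The key identity is $\langle v_i,v_j\rangle=|A_i\cap A_j|$, so that $\langle v_i,v_i\rangle=|A_i|$ and $\langle v_i,v_j\rangle=\lambda$ for all $i\neq j$. If I can show that $v_1,\dots,v_m$ are linearly independent in $\mathbb{R}^n$, then $|\A|=m\le\dim\mathbb{R}^n=n$, which is precisely the claimed bound. (For $m\le 1$ the inequality is trivial, so I may assume $m\ge 2$.)

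To prove independence I would study the Gram matrix $G=\bigl(\langle v_i,v_j\rangle\bigr)_{1\le i,j\le m}$, which by the identity above can be written as $G=D+\lambda J$, where $J$ is the $m\times m$ all-ones matrix and $D=\operatorname{diag}(|A_1|-\lambda,\dots,|A_m|-\lambda)$. First I note that every diagonal entry is nonnegative: since $\lambda=|A_i\cap A_j|\le|A_i|$, we have $|A_i|\ge\lambda$ for all $i$. The clean case is when every set satisfies $|A_i|>\lambda$: then $D$ is positive definite and $\lambda J$ is positive semidefinite, so $G$ is positive definite, hence nonsingular; being the Gram matrix of the $v_i$, its nonsingularity forces $v_1,\dots,v_m$ to be linearly independent.

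Concretely, independence can be extracted from a putative dependence $\sum_i c_iv_i=0$ by taking inner products with each $v_j$, which yields $c_j(|A_j|-\lambda)+\lambda S=0$ where $S=\sum_i c_i$; summing these relations and using $|A_j|>\lambda$ forces $S=0$ and then $c_j=0$ for every $j$. The main obstacle, and the only delicate point, is the degenerate case in which some set has size exactly $\lambda$: then the corresponding diagonal entry of $D$ vanishes and $G$ need no longer be positive definite. I would resolve this by first observing that $|A_i|=\lambda$ forces $A_i\subseteq A_j$ for every $j\neq i$, so at most one set can have size exactly $\lambda$ (two such sets would coincide). Re-running the inner-product computation with the special set $A_1$, say, gives $\lambda S=0$; provided $\lambda\ge 1$ this again yields $S=0$, hence $c_j=0$ for $j\ge 2$ and finally $c_1=0$, so independence still holds. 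The borderline value $\lambda=0$ must be excluded or handled by the standing convention that the members are nonempty, since otherwise a partition of $[n]$ into singletons together with the empty set produces $n+1$ pairwise-disjoint sets and violates the bound.
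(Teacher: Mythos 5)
Your proposal is correct, but there is nothing in the paper to compare it against: Bose's theorem appears here only as quoted background, with a citation to Bose (1949) and no proof given (the paper's own machinery is aimed at the modular $L$-intersecting theorems, not at this classical statement, which also contains the obvious typo $|A_i\cap A_i|$ for $|A_i\cap A_j|$). What you give is the standard linear-algebra proof of the nonuniform Fisher inequality, and it is sound: the Gram matrix decomposition $G=D+\lambda J$, the observation that $|A_i|\geq\lambda$ always, the structural fact that $|A_i|=\lambda$ forces $A_i\subseteq A_j$ for all $j\neq i$ so that at most one member can be degenerate, and the separate treatment of that member are exactly the right ingredients. Your flag on $\lambda=0$ is also correct and worth keeping: with the empty set admitted, $n$ singletons together with $\emptyset$ give $n+1$ pairwise disjoint sets, so the theorem as literally stated needs either $\lambda\geq 1$ or nonempty members (Bose's original setting, balanced incomplete block designs, has $\lambda\geq 1$). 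One step deserves to be written out rather than asserted: ``summing these relations forces $S=0$'' is not immediate from adding the equations $c_j(|A_j|-\lambda)+\lambda S=0$. Either substitute $c_j=-\lambda S/(|A_j|-\lambda)$ into $S=\sum_j c_j$ to get
\[
S\left(1+\lambda\sum_{j}\frac{1}{|A_j|-\lambda}\right)=0,
\]
where the bracketed factor is at least $1$, hence $S=0$ and then every $c_j=0$; or bypass the bookkeeping entirely by evaluating the quadratic form, $0=\sum_{i,j}c_ic_j\langle v_i,v_j\rangle=\lambda S^2+\sum_j c_j^2(|A_j|-\lambda)$, a sum of nonnegative terms each of which must vanish. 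With that detail filled in, your argument is a complete and correct proof, and it is philosophically of a piece with the paper, which uses the same incidence-vector and linear-independence paradigm (over $\mathbb{F}_p$ rather than $\mathbb{R}$) for its main results.
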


In 1961, Erd\"{o}s, Ko and Rado \cite{EKR} proved the following classical result on $k$-uniform intersecting families.

\begin{thm}[Erd\"{o}s, Ko and Rado, 1961 \cite{EKR}]
Let $n \geq 2k$ and let $\A$ be a $k$-uniform intersecting family of subsets of $[n]$. Then $|\A| \leq {n-1\choose k-1}$ with
equality only when $\A$ consists of all $k$-subsets containing a common element.
\end{thm}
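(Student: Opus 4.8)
The plan is to establish both the bound and the extremal characterization by Katona's cyclic permutation (double-counting) argument, which yields the exact constant $\binom{n-1}{k-1}$ most transparently. I would place the $n$ elements of $[n]$ around a circle, and call a $k$-subset an \emph{arc} of that cyclic ordering if it consists of $k$ consecutive points. The strategy is to first bound how many arcs of a single circle can belong to the intersecting family $\A$, and then average this local bound over all cyclic orderings.

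The local step is the following cycle bound: for any fixed cyclic ordering of $[n]$ with $n\geq 2k$, at most $k$ of the $n$ length-$k$ arcs can lie in $\A$. To see this, fix an arc $A\in\A$ on the circle and consider the arcs meeting $A$. Since $n\geq 2k$, any two length-$k$ arcs whose starting points differ by exactly $k$ are adjacent, hence disjoint. The $2(k-1)$ arcs other than $A$ that intersect $A$ group into $k-1$ such disjoint pairs — each arc overlapping $A$ on one side is paired with its shift by $k$ positions, which meets $A$ on the other side. Because $\A$ is intersecting, it contains at most one arc from each pair, so at most $1+(k-1)=k$ arcs of $\A$ sit on this circle.

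For the global step I would double count the pairs $(C,A)$ in which $C$ is a cyclic ordering of $[n]$ and $A\in\A$ is an arc of $C$. There are $(n-1)!$ cyclic orderings, each with exactly $n$ arcs, and a fixed $k$-set is an arc in exactly $k!\,(n-k)!$ of them. Counting one way gives $|\A|\cdot k!\,(n-k)!$; counting the other way and applying the cycle bound gives at most $(n-1)!\cdot k$. Hence $|\A|\leq \frac{k\,(n-1)!}{k!\,(n-k)!}=\binom{n-1}{k-1}$.

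The step I expect to be the main obstacle is the equality characterization, because equality in the averaging forces the cycle bound to be tight on \emph{every} circle simultaneously. I would first analyze when a single circle carries exactly $k$ arcs of $\A$: one checks that a maximum intersecting set of arcs must be a pencil of $k$ consecutive arcs through a common point. The delicate part is then to propagate this across all cyclic orderings and show the common point is the \emph{same} element $x\in[n]$ for each, which would force $\A$ to be exactly the family of all $k$-subsets containing $x$. An alternative that bypasses this global consistency argument is the shifting (compression) method: repeatedly applying $(i,j)$-shifts preserves both $|\A|$ and the intersecting property, reduces $\A$ to a compressed family, and then the bound together with its equality case can be read off directly from the structure of compressed intersecting families.
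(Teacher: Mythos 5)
The first thing to note is that the paper does not prove this statement at all: Theorem~1.3 is classical background quoted from the reference [EKR], so there is no internal proof to compare against, and your proposal has to be judged on its own merits. Your Katona cycle argument for the inequality is correct and complete. The pairing of the $2(k-1)$ arcs meeting a fixed arc $A$ into $k-1$ disjoint pairs (each right-overlapping arc paired with its shift by $k$, which left-overlaps $A$) uses $n\geq 2k$ exactly where it is needed, and the double count $|\A|\cdot k!\,(n-k)! \leq k\cdot (n-1)!$ correctly yields $|\A|\leq\binom{n-1}{k-1}$. This is a genuinely different route from the original Erd\H{o}s--Ko--Rado shifting proof, and a standard and transparent one.

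The equality clause, however, is where your proposal has a real gap, and one of your claimed steps is false as stated. You assert that a circle carrying $k$ arcs of an intersecting family must carry a pencil of $k$ consecutive arcs through a common point. This fails at $n=2k$: on a $6$-cycle with $k=3$, the arcs $\{0,1,2\}$, $\{2,3,4\}$, $\{4,5,0\}$ are pairwise intersecting (two length-$k$ arcs of a $2k$-cycle are disjoint only if they are complementary) and attain the cycle bound of $k=3$ arcs, yet they have no common point. This is not a repairable slip in your argument, because the uniqueness assertion itself fails at $n=2k$: choosing exactly one $k$-set from each complementary pair $\left(S,[2k]\setminus S\right)$ gives an intersecting family of size $\frac{1}{2}\binom{2k}{k}=\binom{2k-1}{k-1}$ that need not be a star. (So the statement as transcribed in the paper is itself imprecise; uniqueness requires $n>2k$.) For $n>2k$ your plan can be carried out, but both steps you defer --- the local pencil lemma and the propagation of a common element across all cyclic orderings --- require genuine work and are the actual content of the equality case; likewise, your shifting alternative does not let the equality case be ``read off directly,'' since compressions are not injective on extremal families and one must run the standard, somewhat delicate, reverse-shift analysis to pull uniqueness back to the original family. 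As it stands, your proposal proves the bound but not the characterization of equality.
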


In 1975, Ray-Chaudhuri and Wilson \cite{RW} made a major progress by deriving the following upper
bound for a $k$-uniform $L$-intersecting family.

\begin{thm}[Ray-Chaudhuri and Wilson, 1975 \cite{RW}]\label{RW}
If $\A$ is a $k$-uniform $L$-intersecting family of subsets of $[n]$, then $|\A|\leq {n \choose s}$.
\end{thm}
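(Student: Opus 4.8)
The plan is to run the polynomial (linear algebra) method over $\f{p}$, but to pair the usual diagonal polynomials with a \emph{carefully rigged} family of auxiliary monomials whose cardinality is arranged so that one single linear independence statement already delivers the exact bound $\sum_{j=s-2r+1}^{s}\binom{n-1}{j}$.

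First I would attach to each $A_i$ its characteristic vector $v_i\in\{0,1\}^n\subseteq\f{p}^n$, so that $\langle v_i,v_j\rangle\equiv|A_i\cap A_j|\pmod p$, and set $f_i(x)=\prod_{l\in L}(\langle v_i,x\rangle-l)$. Because $|A_i|\bmod p\in K$ and $K\cap L=\emptyset$, this yields the diagonal property $f_i(v_i)\not\equiv0$ and $f_i(v_j)\equiv0$ for $j\ne i$. Reducing each $f_i$ modulo the relations $x_t^2=x_t$ gives multilinear polynomials $\tilde f_i$ of degree at most $s$ that agree with $f_i$ on $\{0,1\}^n$ and retain the diagonal property; hence the $\tilde f_i$ are linearly independent as functions on $\{0,1\}^n$. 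By itself this yields only the crude bound $|\A|\le\sum_{j=0}^{s}\binom{n}{j}$, so the whole game is to cut the ambient space down to the target window.

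The key step is the choice of auxiliary polynomials. I would single out one coordinate, say $n$, and adjoin to $\{\tilde f_i\}$ the multilinear monomials of two types: (A) all $x_T$ with $|T|\le s-2r$; and (B) all $x_T$ with $s-2r+1\le|T|\le s$ and $n\in T$. All of these, together with the $\tilde f_i$, lie in the space of multilinear polynomials of degree at most $s$, of dimension $\sum_{j=0}^{s}\binom{n}{j}$. A direct count gives $|\text{(A)}|=\sum_{j=0}^{s-2r}\binom{n}{j}$ and $|\text{(B)}|=\sum_{j=s-2r+1}^{s}\binom{n-1}{j-1}$, and the Pascal identity $\binom{n}{j}=\binom{n-1}{j}+\binom{n-1}{j-1}$ produces precisely
\[
\sum_{j=0}^{s}\binom{n}{j}-|\text{(A)}|-|\text{(B)}|=\sum_{j=s-2r+1}^{s}\binom{n-1}{j}.
\]
Thus if $\{\tilde f_i\}\cup\text{(A)}\cup\text{(B)}$ is linearly independent, the theorem follows at once. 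Since the monomials in (A) and (B) are manifestly independent, it suffices to show that the images of the $\tilde f_i$ are independent in the quotient by $\mathrm{span}(\text{(A)}\cup\text{(B)})$; this quotient is identified with the multilinear polynomials in $x_1,\dots,x_{n-1}$ of degree in the window $[s-2r+1,s]$, and the image of $\tilde f_i$ is the restriction $\tilde f_i|_{x_n=0}$ truncated to that window, i.e. a Ray-Chaudhuri–Wilson-type polynomial for the set $A_i\cap[n-1]$.

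The hard part will be exactly this last independence. If a relation $\sum_i c_i\tilde f_i=\sum_T d_T x_T$ holds modulo the relations, with $T$ ranging over (A) and (B), then evaluating at $v_j$ collapses the left side to $c_jf_j(v_j)$ but leaves $\sum_{T\subseteq A_j}d_T$ on the right, so the diagonal trick alone does not close the argument. To break this I would order the sets by increasing size, group them by their residue $|A_i|\bmod p\in K$ (there are $r$ classes), and run a triangularity/Vandermonde argument showing that no nontrivial combination can be absorbed into degrees below $s-2r+1$ or into monomials containing the coordinate $n$. This is where the two hypotheses enter: under $n\ge s+\max_i k_i$ the homogeneous top parts are already independent in the spirit of Theorem~\ref{RW}, whereas under $n\ge 2s-2r+1$ there is enough room to realize the required evaluation points and keep the relevant square matrix nonsingular. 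The width $2r$ of the window (rather than $r$) is precisely the slack that makes the absorption impossible, and verifying the nonsingularity under each hypothesis is the main technical obstacle I anticipate.
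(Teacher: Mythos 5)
Your proposal does not prove the statement at hand. Theorem~\ref{RW} is the classical Ray-Chaudhuri--Wilson theorem: $\A$ is $k$-uniform and $L$-intersecting with \emph{exact} intersection sizes, there is no prime $p$, no residue set $K$, no hypothesis on $n$, and the bound is $\binom{n}{s}$. What you sketch instead assumes $|A_i|\bmod p\in K$ with $|K|=r$, invokes the hypotheses $n\ge s+\max_i k_i$ or $n\ge 2s-2r+1$, and aims at the window bound $\sum_{j=s-2r+1}^{s}\binom{n-1}{j}$ --- that is the paper's own main result (Theorems~\ref{main} and~\ref{cor}), not Theorem~\ref{RW}. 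Note also that the paper gives no proof of Theorem~\ref{RW}; it is quoted as background from \cite{RW}. Nor does your argument specialize to yield Theorem~\ref{RW}: Ray-Chaudhuri--Wilson holds for all $n$, whereas your framework would only recover it (taking $p>n$, $K=\{k\}$, $r=1$, so that the window bound becomes $\binom{n-1}{s}+\binom{n-1}{s-1}=\binom{n}{s}$) under an extraneous lower bound on $n$.

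Even judged as a sketch of the paper's Theorem~\ref{cor}, the decisive step is missing, and the specific device you chose would fail. You adjoin \emph{raw} monomials $x_T$ (your families (A) and (B)) and then concede that evaluation at the $v_j$ ``does not close the argument,'' deferring to an unspecified triangularity/Vandermonde argument; but the obstruction you met is structural: raw monomials do not vanish at the characteristic vectors, so the diagonal trick cannot separate the $\tilde f_i$ from them. The paper circumvents exactly this by multiplying its auxiliary monomials with polynomials that vanish at all relevant evaluation points: it uses $q_L(x)=(1-x_n)\prod_{i\in L}x_i$ and $g_I(x)=g(x)\prod_{i\in I}x_i$ with $g(x)=\prod_{h\in K\cup(K-1)}\bigl(\sum_{i=1}^{n-1}x_i-h\bigr)$. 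Then evaluating at $v_{A_{i_0}}$ (and at the shifted point $v_{i_0}'=v_{i_0}+e_n$, which is why $K-1$ appears in $g$) kills every auxiliary term and forces all $a_i=0$; the coefficients $b_L$ die by substituting $x_n=0$ and $x_n=1$; and the independence of the $g_I$ is not proved by a Vandermonde computation but by the Alon--Babai--Suzuki gap lemma (Lemma~\ref{LEM}), checked through a case analysis on where $p+k_1-1$ falls, with the residual case $n\le 2s-2r$ handled separately by Theorem~\ref{main} (itself proved by a Qian--Ray-Chaudhuri-style dimension count on the linear forms $L_I$) together with a direct binomial estimate via Lemma~\ref{Kimequality}. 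So beyond the statement mismatch, the one step you flag as ``the main technical obstacle'' is precisely where all of the paper's machinery lives, and your plan as written supplies none of it.
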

In terms of parameters $n$ and $s$, this inequality is best possible, as shown by the set of all $s$-subsets of $[n]$ with $L=\{0,1,\ldots,s-1\}$.

In 1981, Frankl and Wilson \cite{FW} obtained the following celebrated theorem which extends Theorem \ref{RW} by allowing different subset sizes.

\begin{thm}[Frankl and Wilson, 1981 \cite{FW}]\label{FW}
If $\A$ is an $L$-intersecting family of subsets of $[n]$, then $\A\leq{n \choose s}+{n \choose s-1}+\cdots +{n \choose 0}$.
\end{thm}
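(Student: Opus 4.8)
The plan is to prove Theorem~\ref{FW} by the polynomial (linear-algebra) method, exhibiting an injection from $\A$ into a space of low-degree polynomials whose dimension is exactly the claimed bound. Write $L=\{l_1,\dots,l_s\}$ and identify each $A_i$ with its characteristic vector $v_i\in\{0,1\}^n$, so that $\langle v_i,v_j\rangle=|A_i\cap A_j|$. After relabelling, assume the sets are ordered by size, $|A_1|\le|A_2|\le\cdots\le|A_m|$. To each set I attach the polynomial
\[
f_i(x)=\prod_{l\in L,\ l<|A_i|}\bigl(\langle v_i,x\rangle-l\bigr)\in\mathbb{R}[x_1,\dots,x_n],
\]
which has degree at most $s$ because the product runs over a subset of $L$.

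The heart of the argument is to establish a triangular evaluation pattern. First, $f_i(v_i)=\prod_{l<|A_i|}(|A_i|-l)$ is a product of strictly positive integers, hence nonzero. Second, I claim $f_i(v_j)=0$ whenever $j<i$. Indeed, $|A_i\cap A_j|\le|A_j|\le|A_i|$ by the ordering; if the intersection size equalled $|A_i|$ then $A_i\subseteq A_j$ together with $|A_j|\le|A_i|$ would force $A_i=A_j$, contradicting distinctness. Hence $|A_i\cap A_j|<|A_i|$, and since $|A_i\cap A_j|\in L$ this value occurs as one of the roots $l$ in the product, making $f_i(v_j)=0$. The matrix $\bigl(f_i(v_j)\bigr)_{i,j}$ is therefore triangular with nonzero diagonal, so evaluating a vanishing linear combination $\sum_i c_if_i$ successively at $v_1,v_2,\dots$ forces all $c_i=0$; thus $f_1,\dots,f_m$ are linearly independent.

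It remains to bound the ambient dimension. Since the $f_i$ are only ever evaluated on $0/1$ vectors, I may replace each power $x_k^{a}$ with $a\ge1$ by $x_k$ without changing any value $f_i(v_j)$; this multilinear reduction does not raise the degree. Each $f_i$ thereby lands in the space of multilinear polynomials of degree at most $s$, spanned by the squarefree monomials $\prod_{k\in S}x_k$ with $|S|\le s$, whose dimension is $\binom{n}{s}+\binom{n}{s-1}+\cdots+\binom{n}{0}$. Linear independence then yields $|\A|=m\le\sum_{i=0}^{s}\binom{n}{i}$, as required.

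The step I expect to be the main obstacle---and the only place where the hypotheses are genuinely used---is the triangular vanishing for $j<i$: one must argue that the ordering by size forces $|A_i\cap A_j|<|A_i|$ (ruling out the containment $A_i\subseteq A_j$) so that the common intersection value really is cancelled by a factor of $f_i$. Everything else (the degree count, the multilinear reduction, and the dimension of the monomial space) is routine once this triangular pattern is in place.
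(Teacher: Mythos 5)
Your proof is correct. Note, however, that the paper does not prove Theorem~\ref{FW} at all: it is quoted as background (Frankl--Wilson, 1981) with no argument given, so there is no internal proof to compare against. What you have written is the now-standard polynomial-method proof of the nonuniform theorem, which is in fact the argument of Alon, Babai and Suzuki \cite{ABS} (Frankl and Wilson's original proof went through higher incidence matrices instead). All the delicate points are handled properly: ordering the sets by nondecreasing size and truncating each product to the roots $l<|A_i|$ gives the triangular pattern, since for $j<i$ one has $|A_i\cap A_j|\le|A_j|\le|A_i|$ with equality throughout forcing $A_i=A_j$, so $|A_i\cap A_j|$ is a genuine root of $f_i$, while $f_i(v_i)=\prod_{l<|A_i|}(|A_i|-l)>0$; the successive evaluation at $v_1,v_2,\ldots$ then kills the coefficients in order; and the multilinear reduction is harmless because it preserves all values on $\{0,1\}^n$, hence preserves both the triangular evaluation matrix and the degree bound, placing everything in a space of dimension $\binom{n}{s}+\binom{n}{s-1}+\cdots+\binom{n}{0}$. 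It is worth observing that your argument is very much in the spirit of the machinery the paper itself deploys in Section~3 (multilinear polynomials, evaluation at incidence vectors, dimension counting in the space of multilinear polynomials of degree at most $s$), so your proof is not merely correct but methodologically consistent with the paper; the one feature the paper's harder results need that your setting does not is the auxiliary families of polynomials ($q_L$, $g_I$) used to pad the independent set and thereby subtract terms from the trivial bound $\sum_{i=0}^{s}\binom{n}{i}$, which is exactly the bound you are proving here.
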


The upper bound in Theorem \ref{FW} is best possible, as demonstrated by the set of all subsets of size at most $s$ of $[n]$.

In the same paper, a modular version of Theorem \ref{RW} was also proved.

\begin{thm}[Frankl and Wilson, 1981 \cite{FW}]\label{FW2}
If $\A$ is a $k$-uniform family of subsets of $[n]$ such that $k\pmod{p}\notin L$ and $|A_i\cap A_j|\pmod{p}\in L$ for all $i\neq j$, then $|\A|\leq {n \choose s}$.
\end{thm}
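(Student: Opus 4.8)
The plan is to use the linear-algebra (polynomial) method over the field $\f p$. To each $A_i$ I associate its characteristic vector $v_i\in\{0,1\}^n$, so that $\langle v_i,v_j\rangle=|A_i\cap A_j|$ and $\langle v_i,v_i\rangle=|A_i|=k$. Writing $\bar k$ for the residue of $k$ modulo $p$ and regarding $L=\{l_1,\dots,l_s\}$ as elements of $\f p$, I define for each $i$ the polynomial
\[
f_i(x)=\prod_{j=1}^{s}\bigl(\langle v_i,x\rangle-l_j\bigr)\in\f p[x_1,\dots,x_n],
\]
which has degree $s$. Evaluating at the characteristic vectors gives a triangular system: for $i\ne j$ the hypothesis $|A_i\cap A_j|\bmod p\in L$ forces one factor to vanish, so $f_i(v_j)=0$, while $f_i(v_i)=\prod_{j}(\bar k-l_j)\ne 0$ because $\bar k\notin L$ makes every factor nonzero in $\f p$. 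A standard argument then shows the $f_i$ are linearly independent over $\f p$: from $\sum_i c_if_i\equiv 0$, evaluation at $v_j$ yields $c_jf_j(v_j)=0$, hence $c_j=0$.

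It remains to bound the dimension of the ambient space containing the $f_i$. Since the $v_j$ have $0/1$ entries, I reduce each $f_i$ modulo the relations $x_t^2=x_t$ to a multilinear polynomial $\bar f_i$ of degree at most $s$ without changing any value $f_i(v_j)$; the $\bar f_i$ remain linearly independent as functions on $\{v_1,\dots,v_m\}$. The multilinear polynomials of degree at most $s$ span a space of dimension $\sum_{i=0}^{s}\binom ni$, which only reproduces the weaker bound of Theorem \ref{FW}. To reach the sharper $\binom ns$ I must exploit $k$-uniformity, which is the point of pairing the hypothesis $\bar k\notin L$ with a fixed size. Because every $v_j$ lies on the hyperplane $x_1+\cdots+x_n=k$, the form $\sum_t x_t-k$ vanishes on all evaluation points; using it I can rewrite, as functions on this point set, each multilinear monomial $x_S$ with $|S|<s$ as a combination of the monomials $x_{S\cup\{t\}}$ of one higher degree, via the identity $(k-|S|)\,x_S=\sum_{t\notin S}x_{S\cup\{t\}}$ on $D=\{v:\sum_t v_t=k\}$. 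Iterating pushes every $\bar f_i$ into the span of the $\binom ns$ monomials $x_S$ with $|S|=s$, so the space of functions containing the independent $\bar f_i$ has dimension at most $\binom ns$, giving $m=|\A|\le\binom ns$.

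The main obstacle is exactly this last dimension-reduction step. The raising identity requires the scalars $k-|S|$ to be invertible in $\f p$, i.e. $k\not\equiv|S|\pmod p$ for the relevant degrees $|S|<s$, so I expect to need $p$ suitably large relative to $s$ (or to argue directly that these small residues cannot obstruct the homogenization under the stated hypotheses) in order to legitimately collapse everything to degree exactly $s$ over $\f p$. I must also verify that multilinearization followed by restriction to the hyperplane preserves the linear independence established in the first step. Arranging the polynomial space so that this works uniformly, rather than merely recovering the bound $\sum_{i\le s}\binom ni$, is where the real content of Theorem \ref{FW2} lies.
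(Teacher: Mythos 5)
First, a point of reference: the paper does not prove Theorem~\ref{FW2} at all --- it is quoted from Frankl--Wilson \cite{FW} --- so your attempt can only be measured against the standard proofs in the literature. Your opening half is the correct and standard one: the polynomials $f_i(x)=\prod_{j}(\langle v_i,x\rangle-l_j)$ are linearly independent over $\f{p}$ by the diagonal criterion, since $f_i(v_j)=0$ for $i\ne j$ while $f_i(v_i)=\prod_j(\bar k-l_j)\ne 0$ because $k\bmod p\notin L$, and multilinearization preserves all values on $\{0,1\}^n$. As you note, this alone only yields $|\A|\le\sum_{i=0}^{s}\binom{n}{i}$, which is Theorem~\ref{FW}, not Theorem~\ref{FW2}.

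The gap is exactly where you suspect it, and as written it is fatal. The raising identity $(k-|S|)\,x_S=\sum_{t\notin S}x_{S\cup\{t\}}$ on the $k$-slice is correct, but to use it to rewrite $x_S$ in terms of higher-degree monomials you must invert $k-|S|$ in $\f{p}$ for every $|S|\in\{0,1,\ldots,s-1\}$, i.e.\ you need $k\not\equiv j\pmod p$ for all $0\le j\le s-1$. The hypothesis only forbids $k\equiv l\pmod p$ for $l\in L$, and $L$ need not contain $\{0,\ldots,s-1\}$: already $p=7$, $L=\{3,5\}$, $k\equiv 1\pmod 7$ blocks the step from degree $1$ to degree $2$. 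So the homogenization does not go through under the stated hypotheses, and ``take $p$ large'' is not available since $p$ is part of the data. This obstruction is not an artifact of your write-up: the whole difficulty of the uniform modular theorem is swallowing the low-degree monomials over $\f{p}$, which is why Frankl and Wilson proved it via a rank analysis of higher inclusion matrices modulo $p$ rather than by naive homogenization (the natural identity $W_{i,s}W_{s,k}=\binom{k-i}{s-i}W_{i,k}$ runs into the very same divisibility problem), why Alon--Babai--Suzuki's Lemma~3.6 carries the arithmetic side condition $r(s-r+1)\le p-1$, and why removing such conditions in the multi-size setting took until Hwang--Kim \cite{HK}. To complete a proof along your lines you would need a genuinely new mechanism for the degrees $j<s$ with $k\equiv j\pmod p$; identifying the step is not the same as carrying it out, and here the step you have left open is the entire content of the theorem.
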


In 1991, Alon, Babai and Suzuki \cite{ABS} proved the following theorem, which is a generalization of Theorem \ref{FW2} by replacing the condition of uniformity with the condition that the members of $\A$ have $r$ different sizes.

\begin{thm}[Alon, Babai and Suzuki, 1991 \cite{ABS}]\label{ABS}
Let $K=\{k_1,k_2,\ldots,k_r\}$ and $L=\{l_1,l_2,\ldots,l_s\}$ be two disjoint subsets of $\{0,1,\ldots,p-1\}$, where $p$ is a prime, and let $\A$ be a family of subsets of $[n]$ such that $|A_i|\pmod{p}\in K$ for all $A_i\in \A$ and $|A_i\cap A_j|\pmod{p}\in L$ for $i\neq j$. If $r(s-r+1)\leq p-1$ and $n\geq s +\max_{1\leq i\leq r}k_i$, then $|\A|\leq {n \choose s}+{n \choose s-1}+\cdots+{n \choose s-r+1}$.
\end{thm}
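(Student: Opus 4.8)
The plan is to use the polynomial (linear-algebra) method over $\mathbb{F}_p$, representing each $A\in\A$ by its characteristic vector $v_A\in\{0,1\}^n$, so that $\langle v_A,v_{A'}\rangle=|A\cap A'|$. For each $A$ I set
\[
f_A(x)=\prod_{j=1}^{s}\bigl(\langle v_A,x\rangle-l_j\bigr)\in\mathbb{F}_p[x_1,\dots,x_n],
\]
a polynomial of degree $s$, and let $\bar f_A$ be its multilinear reduction obtained by replacing every $x_i^2$ by $x_i$ (legitimate since we only ever evaluate on $0/1$ points). Because $K\cap L=\emptyset$ and $|A|\bmod p\in K$, we have $\bar f_A(v_A)=\prod_j(|A|-l_j)\neq 0$, while $|A\cap A'|\bmod p\in L$ gives $\bar f_A(v_{A'})=0$ for $A'\neq A$. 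The resulting triangular evaluation table shows the $\bar f_A$ are linearly independent, and all of them lie in the space $V_s$ of multilinear polynomials of degree at most $s$, with $\dim V_s=\sum_{i=0}^{s}\binom ni$.

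This already yields the Frankl--Wilson bound $\sum_{i=0}^s\binom ni$; the whole point is to shave it down to the $r$-term sum $\sum_{i=s-r+1}^{s}\binom ni=\dim V_s-\dim V_{s-r}$. To do this I would adjoin $\dim V_{s-r}=\sum_{i=0}^{s-r}\binom ni$ auxiliary polynomials and prove the enlarged system is still independent. The natural candidates are $g_B(x)=\bigl(\prod_{i=1}^{r-1}(\langle\mathbf 1,x\rangle-k_i)\bigr)\,x^B$ for $B\subseteq[n]$ with $|B|\le s-r$: they have degree at most $s-1$, there are exactly $\dim V_{s-r}$ of them, and the size-detecting factor vanishes on every $v_A$ whose size residue is not the largest, which is what lets the $r$ size classes be separated one at a time.

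The engine of the independence proof is the observation that the $d$-th homogeneous component of $\bar f_A$ factors as
\[
[\bar f_A]_d=c_d\sum_{\substack{S\subseteq A\\ |S|=d}}x^S,
\]
where the scalar $c_d$ depends only on $d$ and on $L$ (not on $A$), and $c_s=s!$. Thus a dependence $\sum_A c_A\bar f_A+\sum_B d_B g_B=0$ forces, degree by degree from the top, the inclusion conditions $\sum_{A\supseteq S}c_A=0$ at the $r$ highest levels $|S|=s,s-1,\dots,s-r+1$, after which the diagonal evaluation $c_{A'}\prod_j(|A'|-l_j)=(\text{low-degree remainder})(v_{A'})$ propagates the vanishing through the size classes. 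I would carry this out by induction on $k_1<\dots<k_r$, peeling off the top class using the $g_B$, then the next, and so on, concluding $c_A=0$ and $d_B=0$.

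The hard part is exactly this combined linear-independence step, and it is here that both hypotheses are consumed. The condition $n\ge s+\max_i k_i$ guarantees that the sets are large enough that their top $r$ levels of $d$-subset data determine them across the family, so no spurious element of the inclusion kernel survives; while $r(s-r+1)\le p-1$ keeps the leading scalars $c_{s-r+1},\dots,c_s$ and the Vandermonde-type products in the $k_i$ from collapsing to $0$ modulo $p$, the quantity $r(s-r+1)$ being the total degree one must keep below $p$ to avoid characteristic-$p$ degeneration. Once the enlarged family of $m+\dim V_{s-r}$ polynomials is shown independent inside $V_s$, the inequality $m+\sum_{i=0}^{s-r}\binom ni\le\sum_{i=0}^{s}\binom ni$ rearranges to the claimed bound $|\A|\le\binom ns+\binom n{s-1}+\cdots+\binom n{s-r+1}$.
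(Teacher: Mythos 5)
The paper does not actually prove Theorem~\ref{ABS}; it quotes it from \cite{ABS} and only restates the key ingredient of the original argument as Lemma~\ref{LEM}. Measured against that argument (and against the closely analogous construction in Section~3 of this paper), your outline has the right skeleton --- the multilinear polynomials $\bar f_A$ plus $\dim V_{s-r}$ auxiliary polynomials inside $V_s$ --- but two of your choices break the proof. First, the auxiliary polynomials must carry the \emph{full} size-detecting product $\prod_{i=1}^{r}(\langle\mathbf 1,x\rangle-k_i)$ (degree $r$, so $g_B$ still has degree $\le r+(s-r)=s$), not the truncated product over $i\le r-1$. The whole point of that factor is that it vanishes at $v_A$ for \emph{every} $A\in\A$, so that evaluating the dependence at any single $v_{A_0}$ instantly yields $c_{A_0}\prod_j(|A_{0}|-l_j)=0$ and hence $c_{A_0}=0$; no homogeneous-component or inclusion-matrix analysis is needed. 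With only $r-1$ factors, evaluation at a set in the top class $|A|\equiv k_r\pmod p$ gives $c_A f_A(v_A)+\prod_{i<r}(k_r-k_i)\sum_{B\subseteq A}d_B=0$ with a nonzero Vandermonde-type coefficient, so you cannot conclude $c_A=0$, and the ``peeling'' induction you describe cannot even start.

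Second, and more seriously, the crux of the theorem --- the only place the hypotheses $r(s-r+1)\le p-1$ and $n\ge s+\max_i k_i$ enter --- is the linear independence of the auxiliary family $\{g_B\}$ by itself, and your proposal never proves it; your stated explanation of how the hypotheses are ``consumed'' (nonvanishing of leading scalars $c_d$, determination of sets by their top $r$ levels) does not correspond to any step of a correct argument. What actually happens is this: by the gap lemma (Lemma~\ref{LEM}, which is ABS's Lemma~3.6), the polynomials $\bigl(\prod_{k\in K}(x_1+\cdots+x_n-k)\bigr)x^B$ with $|B|\le s-r$ are independent provided $(K+p\mathbb{Z})\cap[0,n]$ has a gap of size $\ge s-r+2$. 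The $r$ residues of $K$ cut the cycle $\mathbb{Z}_p$ into $r$ arcs of total length $p\ge r(s-r+1)+1$, so by pigeonhole one arc has length $\ge s-r+2$; the condition $n\ge s+\max_i k_i$ ensures this gap survives at the right-hand endpoint of the interval $[0,n]$. Without supplying this step (or an equivalent), the proposal establishes only the Frankl--Wilson bound $\sum_{i=0}^{s}\binom{n}{i}$, not the claimed $\sum_{i=s-r+1}^{s}\binom{n}{i}$.
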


In the proof of Theorem \ref{ABS}, Alon, Babai and Suzuki used a very elegant linear algebra method together with their Lemma~3.6 which needs the condition $r(s-r+1)\leq p-1$ and $n\geq s +\max_{1\leq i\leq r}{k_i}$. They conjectured that the condition $r(s-r+1)\leq p-1$ in the statement of their theorem can be dropped off. However, their approach cannot work for this stronger claim. In an effort to prove the Alon-Babai-Suzuki's conjecture, Snevily \cite{S} obtained the following result.

\begin{thm}[Snevily, 1994 \cite{S}]\label{Snevily}Let $p$ be a prime and $K,L$ be two disjoint subsets of $\{0,1,\ldots,p-1\}$. Let $|L|=s$ and let $\A$ be a family of subsets of $[n]$ such that $|A_i|\pmod{p}\in K$ for all $A_i\in \A$ and $|A_i\cap A_j|\pmod{p}\in L$ for $i\neq j$. Then $|\A|\leq {n-1 \choose s}+{n-1 \choose s-1}+\cdots+{n-1 \choose 0}$.
\end{thm}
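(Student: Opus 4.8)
The plan is to run the polynomial (linear algebra) method over $\f p$, and then to gain the improvement from $n$ to $n-1$ by enlarging the independent set of polynomials by exactly the right number of auxiliary monomials. First I would identify each $A_i$ with its characteristic vector $v_i\in\{0,1\}^n\subseteq\f p^{\,n}$, so that $\langle v_i,v_j\rangle\equiv|A_i\cap A_j|\pmod p$, and define for each $i$ the polynomial
\[
f_i(x)=\prod_{l\in L}\bigl(\langle v_i,x\rangle-l\bigr)\in\f p[x_1,\dots,x_n],
\]
of degree $s=|L|$. Evaluating at $v_j$ gives $f_i(v_j)=\prod_{l\in L}(|A_i\cap A_j|-l)$: for $i\ne j$ the hypothesis $|A_i\cap A_j|\bmod p\in L$ kills one factor, while for $i=j$ the hypothesis $|A_i|\bmod p\in K$ together with $K\cap L=\varnothing$ keeps every factor nonzero. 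Hence the evaluation matrix $(f_i(v_j))$ is diagonal with nonzero diagonal, so the $f_i$ are linearly independent as functions on the cube. Since the $v_j$ are $0/1$ vectors, I would reduce each $f_i$ modulo $x_t^2=x_t$ to a multilinear polynomial $\tilde f_i$ without changing its values on $\{0,1\}^n$; the $\tilde f_i$ stay independent and lie in the space $U$ of multilinear polynomials of degree at most $s$, with $\dim U=\sum_{i=0}^{s}\binom ni$. This already yields the Frankl--Wilson-type bound $m\le\sum_{i=0}^{s}\binom ni$.

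To sharpen each $\binom ni$ to $\binom{n-1}i$, I would use the identity
\[
\sum_{i=0}^{s}\binom ni=\sum_{i=0}^{s}\binom{n-1}i+\sum_{i=0}^{s-1}\binom{n-1}i ,
\]
so that it suffices to adjoin to $\{\tilde f_i\}$ a further collection of $\sum_{i=0}^{s-1}\binom{n-1}i$ elements of $U$ and prove the enlarged family is still linearly independent: the count $m+\sum_{i=0}^{s-1}\binom{n-1}i\le\dim U$ then gives exactly $m\le\sum_{i=0}^{s}\binom{n-1}i$. I would fix the ground element $n$ and take as auxiliary polynomials the multilinear monomials $x_T=\prod_{t\in T}x_t$ indexed by $T\subseteq[n]$ with $n\in T$ and $|T|\le s$; writing $T=\{n\}\cup T'$ with $T'\subseteq[n-1]$ shows there are precisely $\sum_{i=0}^{s-1}\binom{n-1}i$ of them.

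The crux, and the step I expect to be the main obstacle, is proving that $\{\tilde f_i\}\cup\{x_T:\ n\in T,\ |T|\le s\}$ is linearly independent. The natural attack is graded: given a dependence $\sum_i c_i\tilde f_i+\sum_{n\in T}d_Tx_T=0$, pass to the top homogeneous component, where the leading form of $\tilde f_i$ is the multilinearization of $\langle v_i,x\rangle^{s}$, namely $s!\sum_{S\subseteq A_i,\,|S|=s}x_S$ (here $K\ne\varnothing$ forces $s<p$, so $s!\ne0$). Matching coefficients of the $s$-subsets $S$ with $n\notin S$ forces $\sum_{i:\,S\subseteq A_i}c_i=0$ for every such $S$, while the adjoined $x_T$ absorb all constraints coming from $s$-sets containing $n$. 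Descending degree by degree one is led to $\sum_{i:\,S\subseteq A_i}c_i=0$ for all $S\subseteq[n-1]$ with $|S|\le s$, which must then be combined with the diagonal evaluations $f_i(v_j)$ to force $c_i=0$ for all $i$ and hence $d_T=0$. Controlling every homogeneous layer at once and closing the argument against the inclusion pattern of the $A_i$, without any extra hypothesis relating $n$ and $s$, is the delicate part; this independence lemma is exactly the ingredient that upgrades the Frankl--Wilson bound to Snevily's, and is where the real work lies.

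As a warning that no soft argument will do, note that even for $s=1$ the only adjoined monomial is $x_n$, and the independence of $\{f_i\}\cup\{x_n\}$ cannot be read off formally: a dependence forces $\sum_{i:\,j\in A_i}c_i=0$ for all $j\in[n-1]$, and closing from here genuinely requires invoking the intersection hypotheses on the $A_i$ (for instance, in the oddtown-type case $l_1=0$ these hypotheses make the $v_i$ independent, so $c=0$). This confirms that it is the special role of the fixed element $n$ together with the structural hypotheses on $\A$, rather than any bare dimension count, that carries the improvement.
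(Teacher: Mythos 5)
Your overall architecture is the right one (it is essentially Snevily's, and it is also the skeleton of Section~3 of this paper, which is where the relevant machinery appears --- the paper itself only quotes Theorem~\ref{Snevily} without proof). However, there are two genuine problems. First, you explicitly leave the decisive step --- the linear independence of $\{\tilde f_i\}$ together with the auxiliary polynomials --- unproved, describing it as ``where the real work lies''; without that lemma there is no proof. Second, and more seriously, the specific auxiliary family you propose, namely the monomials $x_T=\prod_{t\in T}x_t$ with $n\in T$ and $|T|\le s$, is the \emph{wrong} one: the combined family can genuinely be linearly dependent. Take $p=2$, $K=\{1\}$, $L=\{0\}$ (so $s=1$), $n=3$ and $\A=\{\{1\},\{2\},\{3\}\}$; all hypotheses hold, $f_{A_i}(x)=x_i$, and your single adjoined monomial is $x_3=f_{A_3}$, so the enlarged family is dependent even though $\A$ attains the bound $\binom{n-1}{1}+\binom{n-1}{0}=3$. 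The obstruction is structural: your monomials are supported on the half-cube $\{x_n=1\}$, whereas the polynomials $f_{A_i}$ with $n\in A_i$ also live there, and no evaluation point simultaneously kills all the $x_T$ and isolates such an $f_{A_i}$ (evaluating at $v_{A_i}-e_n$ shifts the self-intersection to $|A_i|-1$, which may lie in $L$ modulo $p$).

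The correct choice, used by Snevily and in the proof of Theorem~\ref{cor} in Section~3, is to adjoin $q_T(x)=(1-x_n)\prod_{t\in T}x_t$ for $T\subseteq[n-1]$ with $|T|\le s-1$; this is again $\sum_{i=0}^{s-1}\binom{n-1}{i}$ polynomials of degree at most $s$, but now supported on $\{x_n=0\}$. Independence then follows from a clean two-stage evaluation with no grading argument needed: (i) for each $A_i$ with $n\in A_i$, evaluate the putative dependence at $v_{A_i}$; every $q_T$ vanishes there and every $f_{A_j}$ with $j\ne i$ vanishes by the intersection hypothesis, forcing $a_i=0$; (ii) for each $A_i$ with $n\notin A_i$, evaluate at $v_{A_i}+e_n$; every $q_T$ vanishes, and $f_{A_j}(v_{A_i}+e_n)=f_{A_j}(v_{A_i})$ for all $j$ with $n\notin A_j$ (the others having already been eliminated), forcing $a_i=0$; (iii) the $q_T$ themselves are independent, so all remaining coefficients vanish. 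Your ``graded'' attack, besides being incomplete, cannot be salvaged for your choice of monomials because of the counterexample above; replacing $x_T$ ($n\in T$) by $(1-x_n)x_{T'}$ ($T'\subseteq[n-1]$, $|T'|\le s-1$) both fixes the construction and makes the independence proof elementary.
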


Since ${n-1\choose s}+{n-1\choose s-1}={n\choose s}$ and ${n \choose s-1}>\sum_{i=0}^{s-2} {n-1\choose i}$ when $n$ is sufficiently large, Theorem~\ref{Snevily} not only confirms the conjecture of Alon, Babai and Suzuki in many cases but also strengthens the upper bound of their theorem when $n$ is sufficiently large.

In 2000, Qian and Ray-Chaudhuri \cite{QR} developed a new linear algebra approach and proved the next theorem which shows that the same conclusion in Theorem \ref{ABS} holds if the two conditions $r(s-r+1)\leq p-1$ and $n\geq s+\max_{1\leq i\leq r}{k_i}$ are replaced by a single more relaxed condition $n\geq 2s-r$.

\begin{thm}[Qian and Ray-Chaudhuri, 2000 \cite{QR}]\label{QR}
Let $p$ be a prime and let $L=\{l_1,l_2,\ldots,l_s\}$ and $K=\{k_1,k_2,\ldots,k_r\}$ be two disjoint subsets of $\{0,1,\ldots,p-1\}$ such that $n\geq 2s-r$. Suppose that $\A$ is a family of subsets of $[n]$ such that $|A_i|\pmod{p}\in K$ for all $A_i\in \A$ and $|A_i\cap A_j|\pmod{p}\in L$ for every $i\neq j$. Then
$|\A|\leq {n \choose s}+{n \choose s-1}+\cdots+{n \choose s-r+1}$.
\end{thm}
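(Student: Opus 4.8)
The plan is to use the linear-algebra (polynomial) method over $\mathbb{F}_{p}$, refined by an augmentation trick that turns the hypothesis of only $r$ distinct size-residues into a saving of the $r$ smallest binomial terms. Throughout I would identify each $A_i$ with its characteristic vector $v_i\in\mathbb{F}_{p}^{\,n}$, so that $\langle v_i,v_j\rangle\equiv|A_i\cap A_j|\pmod p$. Note first that $K\cap L=\emptyset$ and $|K|=r\ge 1$ force $s\le p-1$, so $s!\not\equiv 0\pmod p$, a fact I will use when reading off top-degree parts.

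First I would attach to each member $A_i$ the polynomial
\[
f_i(x)=\prod_{l\in L}\bigl(\langle x,v_i\rangle-l\bigr)\in\mathbb{F}_{p}[x_1,\dots,x_n],
\]
of degree $s$. Evaluating on characteristic vectors gives $f_i(v_j)=\prod_{l\in L}(|A_i\cap A_j|-l)$; for $i\ne j$ the intersection hypothesis forces one factor to vanish mod $p$, while for $i=j$ we get $\prod_{l\in L}(|A_i|-l)\not\equiv 0$ because $|A_i|\bmod p\in K$ and $K\cap L=\emptyset$. Hence $\bigl(f_i(v_j)\bigr)_{i,j}$ is diagonal with nonzero diagonal. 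Since we only ever evaluate at $0/1$ points, I would replace each $f_i$ by its multilinearization $\tilde f_i$ (reducing $x_t^2\mapsto x_t$), which changes no value $f_i(v_j)$ and keeps degree at most $s$.

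Second, I would augment $\{\tilde f_i\}$ by all squarefree monomials $m_T(x)=\prod_{t\in T}x_t$ with $|T|\le s-r$, and claim that the combined family $\{\tilde f_i:1\le i\le m\}\cup\{m_T:|T|\le s-r\}$ is linearly independent over $\mathbb{F}_{p}$. All of these functions lie in the space of multilinear polynomials of degree at most $s$, of dimension $\sum_{j=0}^{s}\binom nj$. Granting the independence, counting yields $|\A|+\sum_{j=0}^{s-r}\binom nj\le\sum_{j=0}^{s}\binom nj$, that is,
\[
|\A|\le\sum_{j=s-r+1}^{s}\binom nj=\binom ns+\binom n{s-1}+\cdots+\binom n{s-r+1},
\]
which is exactly the asserted bound.

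The whole difficulty is thus concentrated in the linear-independence claim, and this is where $n\ge 2s-r$ enters. Writing a dependence as $\sum_i c_i\tilde f_i=-\sum_{|T|\le s-r}d_T m_T=:h$, the right-hand side $h$ is multilinear of degree at most $s-r$; evaluating at $v_j$ and using the diagonal property forces $c_j=h(v_j)/f_j(v_j)$, so the $c_i$ are completely determined by the low-degree polynomial $h$. Taking the degree-$s$ homogeneous part of $\sum_i c_i\tilde f_i=h$ (on which $h$ contributes nothing) and using $[\tilde f_i]_s=s!\sum_{S\subseteq A_i,\,|S|=s}x_S$ gives, for every $s$-subset $S$, the relation $\sum_{i:\,S\subseteq A_i}c_i=0$; the task is to combine this with the determination of the $c_i$ by $h$ and conclude that no nonzero $h$ of degree $\le s-r$ is reproducible, whence $h\equiv 0$ and all $d_T=0$ as well. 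The hard part will be controlling this interaction between the inclusion matrix of $s$-subsets against the $A_i$ and the low-degree monomials: it reduces to an inclusion-type non-degeneracy (rank) statement, and the inequality $n\ge 2s-r$, equivalently $s-r\le n-s$, is precisely the room in the ground set that guarantees it. I expect verifying this rank condition — the technical core of the new linear-algebra approach — to be the main obstacle, while the polynomial construction and the final dimension count are routine.
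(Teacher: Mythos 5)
Your setup --- the diagonal criterion $f_i(v_j)$, multilinearization, and the closing dimension count --- is standard and correct as far as it goes, but the argument has a genuine gap precisely where you say you ``expect the main obstacle'' to be: the linear independence of $\{\tilde f_i\}\cup\{m_T:|T|\le s-r\}$ is asserted, reduced to the claim that no nonzero combination $\sum_i c_i\tilde f_i$ can coincide with a multilinear polynomial of degree at most $s-r$, and then left unproved. That claim \emph{is} the theorem; everything surrounding it is routine. The relations $\sum_{i:S\subseteq A_i}c_i=0$ for all $s$-subsets $S$, together with $c_j=h(v_j)/f_j(v_j)$, do not by themselves force $h\equiv 0$: one would also have to compare the homogeneous parts in degrees $s-1,\dots,s-r+1$ and control the ranks of the resulting inclusion-type matrices, and you offer no argument for why $n\ge 2s-r$ makes this work. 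Note also that your choice of augmenting polynomials is not the one used in any known working proof: the primal-space arguments (Alon--Babai--Suzuki, Hwang--Kim, and Section~3 of this paper) do not adjoin bare monomials but polynomials of the form $g_I(x)=\bigl(\prod_{k\in K}(\sum_{t}x_t-k)\bigr)\prod_{t\in I}x_t$, which vanish at every $v_j$; that vanishing is exactly what lets the evaluation step kill the $c_j$ outright, and even then the independence of the $g_I$ among themselves requires a nontrivial ``gap'' lemma (Lemma~\ref{LEM}) under additional hypotheses on $n$.

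For comparison: the paper does not reprove Theorem~\ref{QR} itself, but Section~2 adapts Qian and Ray-Chaudhuri's method to prove the stronger Theorem~\ref{main}, and that method is dual to yours rather than a completion of it. One introduces the linear forms $L_I=\sum_{i:I\subseteq A_i}x_i$ for $|I|\le s$, shows that the system $\{L_I=0\}$ has only the trivial solution (Proposition~\ref{prop}), so that $|\A|\le\dim\langle L_I\rangle$, and then bounds this dimension by the desired sum of binomial coefficients through a recursion (Lemmas~\ref{cri}--\ref{recurbound}) built on the polynomial $\prod_j\bigl(x-(k_j-i)\bigr)$; the hypothesis on $n$ enters only through the requirement $u+v\le n-1$ in the counting lemma (Lemma~\ref{count}). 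If you wish to complete your primal version, you would in effect have to reprove that dimension bound for the span of the $L_I$; as written, the technical core of the proof is missing.
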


%The nonmodular version of this conjecture was obtained by Hwang et al. \cite{HK2}.

Recently, Hwang and Kim \cite{HK} verified the conjecture of Alon, Babai and Suzuki.

\begin{thm}[Hwang and Kim, 2015 \cite{HK}]\label{HK}
Let $K=\{k_1,k_2,\ldots,k_r\}$ and $L=\{l_1,l_2,\ldots.l_s\}$ be two disjoint subsets of $\{0,1,\ldots,p-1\}$, where $p$ is a prime, and let $\A$ be a family of subsets of $[n]$ such that $|A_i|\pmod{p}\in K$ for all $A_i\in \A$ and $|A_i\cap A_j|\pmod{p}\in L$ for $i\neq j$. If $n\geq s +\max_{1\leq i\leq r}k_i$, then $|\A|\leq {n \choose s}+{n \choose s-1}+\cdots+{n \choose s-r+1}$.
\end{thm}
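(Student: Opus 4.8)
The plan is to use the linear-algebra (polynomial) method over $\f{p}$, refined by adjoining low-degree ``filler'' monomials. For each $A_i$ let $v_i\in\{0,1\}^n\subseteq\f{p}^n$ be its characteristic vector and set
\[
f_i(x)=\prod_{l\in L}\Bigl(\textstyle\sum_{j=1}^n (v_i)_j x_j - l\Bigr)\in\f{p}[x_1,\dots,x_n].
\]
Since $\sum_j (v_i)_j(v_j)_j=|A_i\cap A_j|$, we get $f_i(v_j)=\prod_{l\in L}(|A_i\cap A_j|-l)$. For $i\neq j$ the hypothesis $|A_i\cap A_j|\bmod p\in L$ makes one factor vanish, so $f_i(v_j)=0$; for $i=j$ the hypotheses $|A_i|\bmod p\in K$ and $K\cap L=\emptyset$ give $f_i(v_i)\neq 0$. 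Thus the evaluation matrix $\bigl(f_i(v_j)\bigr)$ is diagonal with nonzero diagonal, so the $f_i$ are linearly independent. Reducing modulo the relations $x_j^2=x_j$, each $f_i$ becomes a multilinear polynomial of degree at most $s$; as these live in a space of dimension $\sum_{t=0}^{s}\binom{n}{t}$, this already recovers the Frankl--Wilson bound $|\A|\le\sum_{t=0}^s\binom nt$, which is too weak.

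To reach the sharper $r$-term bound, I would adjoin to the $f_i$ the multilinear monomials $m_S(x)=\prod_{j\in S}x_j$ for all $S\subseteq[n]$ with $|S|\le s-r$ (the interesting case being $s\ge r$; otherwise the asserted bound degenerates to Frankl--Wilson). All of these, together with the $f_i$, are multilinear of degree $\le s$, hence span a subspace of the same ambient space of dimension $\sum_{t=0}^s\binom nt$. If one can show that the combined system $\{f_i\}\cup\{m_S:|S|\le s-r\}$ is linearly independent, then
\[
|\A|+\sum_{t=0}^{s-r}\binom nt\le\sum_{t=0}^{s}\binom nt,
\]
which rearranges exactly to $|\A|\le\binom ns+\binom n{s-1}+\cdots+\binom n{s-r+1}$. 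Hence the whole theorem reduces to this single linear-independence statement.

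To prove the independence, suppose $\sum_i c_i f_i+\sum_{|S|\le s-r} d_S m_S=0$ as a function on $\{0,1\}^n$. Evaluating at $v_j$ and using the diagonal property gives $c_j f_j(v_j)=-\sum_{S\subseteq A_j,\,|S|\le s-r} d_S$, so once all $d_S$ are shown to vanish the $c_j$ vanish too (and then the $m_S$ are independent, finishing the argument). To force $d_S=0$ I would compare homogeneous multilinear components: since $g:=\sum d_S m_S$ has degree $\le s-r$, the components of $\sum_i c_i f_i$ in degrees $s,s-1,\dots,s-r+1$ must all vanish. A direct expansion shows that the degree-$s$ part of $f_i$ is $s!\sum_{S\in\binom{A_i}{s}} m_S$, and more generally the top $r$ graded pieces translate the vanishing conditions into inclusion-matrix relations on the coefficients, the cleanest being $\sum_{A_i\supseteq S} c_i=0$ for every $s$-subset $S$. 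The task is then to deduce $c_i\equiv0\pmod p$ from these $r$ families of relations.

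The hard part will be exactly this last deduction, i.e.\ establishing that the relevant inclusion maps are non-degenerate over $\f{p}$. The subtlety is that the residues $k_i=|A_i|\bmod p$ are bounded by $p-1$ while the true sizes $|A_i|$ may be large, so one cannot simply invoke a Gottlieb-type full-rank theorem with the actual sizes; one must exploit the hypothesis $n\ge s+\max_i k_i$ to create enough room and argue modulo $p$ using only the residues. This is precisely the obstacle at which the original Alon--Babai--Suzuki argument required the extra condition $r(s-r+1)\le p-1$, and removing it demands a careful rank analysis of these modular inclusion relations; I would expect to organize it as an induction on $r$, peeling off one size class at a time while maintaining the vanishing of the top graded components.
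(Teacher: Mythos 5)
First, note that this paper does not prove Theorem~\ref{HK} at all --- it is quoted from \cite{HK} --- so there is no in-paper proof to match; but your proposal has a genuine gap, and it sits exactly where the entire difficulty of the theorem lies. Everything through the dimension count is the standard Frankl--Wilson/Alon--Babai--Suzuki setup; the whole content of the theorem is the linear independence of $\{f_i\}\cup\{m_S:|S|\le s-r\}$, which you leave unproven and yourself flag as ``the hard part.'' Moreover, the route you sketch for it does not work. Because your filler polynomials are bare monomials rather than polynomials vanishing on every $v_{A_i}$, evaluating at $v_{A_j}$ no longer isolates $c_j$, and your fallback --- extracting relations on the $c_i$ from the top $r$ multilinear graded components --- can be entirely vacuous: after the reduction $x_j^2=x_j$, the multilinear degree of $f_i$ is at most $\min(s,|A_i|)$, so if the \emph{actual} sizes satisfy $|A_i|\le s-r$ (perfectly consistent with the hypotheses, e.g.\ $K=\{1\}$, all $A_i$ singletons, $s\ge 2$), every graded component of every $f_i$ in degrees $s,\dots,s-r+1$ is identically zero and your relations say nothing about the $c_i$. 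Even when nonvacuous, ``$\sum_{A_i\supseteq S}c_i=0$ for all $s$-sets $S$ forces $c_i=0$'' is a mod-$p$ inclusion-matrix nonsingularity claim that is false in general. Finally, the hypothesis $n\ge s+\max_i k_i$ never enters your argument, whereas it must enter somewhere.

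The standard repair --- used by Alon--Babai--Suzuki, by Hwang--Kim, and in Section~3 of this paper for the stronger Theorem~\ref{cor} --- is to take fillers of the form $g(x)\prod_{j\in S}x_j$ with $g(x)=\prod_{k\in K}\bigl(x_1+\cdots+x_n-k\bigr)$ and $|S|\le s-r$. These vanish at every $v_{A_i}$ because $|A_i|\bmod p\in K$, so evaluation at $v_{A_j}$ immediately gives $c_j=0$; the residual task, independence of the fillers themselves, is exactly what the ``gap'' lemma (Lemma~\ref{LEM}) delivers, and this is the unique place where $n\ge s+\max_i k_i$ is used: it guarantees that $(K+p\mathbb{Z})\cap[n]$ has a gap of size at least $s-r+2$ in the main cases, the remaining cases being disposed of by a direct binomial-coefficient count as in Case~4 of Section~3. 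Without an annihilating factor on the fillers and without a gap-type lemma, your reduction is not a proof.
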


We note here that in some instances Alon, Babai and Suzuki's condition
holds but Qian and Ray-Chaudhuri's condition does not, while in some other instances the later condition holds but the former condition does not.

In \cite{CL}, Chen and Liu strengthened the upper bounds of Theorem~\ref{Snevily} under the condition $\min \{k_i\} > \max\{l_i\}$.

\begin{thm}[Chen and Liu, 2009 \cite{CL}]\label{ChenLiu}
Let $p$ be a prime and let $L=\{l_1,l_2,\ldots,l_s\}$ and $K=\{k_1,k_2,\ldots,k_r\}$ be two disjoint subsets of $\{0,1,\ldots,p-1\}$ such that $\min \{k_i\} > \max\{l_i\}$. Suppose that $\A$ is a family of subsets of $[n]$ such that $|A_i|\pmod{p}\in K$ for all $A_i\in \A$ and $|A_i\cap A_j|\pmod{p}\in L$ for every $i\neq j$. Then
$|\A|\leq {n-1 \choose s}+{n-1 \choose s-1}+\cdots+{n-1 \choose s-2r+1}$.
\end{thm}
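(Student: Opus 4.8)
The plan is to use the linear-algebra (polynomial) method over $\mathbb{F}_p$. To each set $A_i$ I associate its characteristic vector $v_i\in\{0,1\}^n$ and the polynomial
\[
f_i(x)=\prod_{l\in L}\bigl(\langle x,v_i\rangle-l\bigr)\in\mathbb{F}_p[x_1,\dots,x_n],
\]
a polynomial of degree $s$ in the single linear form $\langle x,v_i\rangle=\sum_{t\in A_i}x_t$. Since $\langle v_j,v_i\rangle=|A_i\cap A_j|$, the hypotheses $|A_i\cap A_j|\bmod p\in L$ for $i\ne j$ and $\bar k_i:=|A_i|\bmod p\in K$ with $K\cap L=\emptyset$ give at once the diagonal relations $f_i(v_j)\equiv 0$ for $i\ne j$ and $f_i(v_i)=\prod_{l\in L}(\bar k_i-l)\not\equiv 0$. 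Reducing each $f_i$ modulo the relations $x_t^2=x_t$ (legitimate since we only evaluate on $0/1$ vectors) yields multilinear polynomials $\hat f_i$ of degree at most $s$ satisfying the same diagonal relations, hence linearly independent. This already bounds $m=|\A|$ by the dimension of the space of multilinear polynomials of degree at most $s$, namely $\sum_{j\le s}\binom nj$; the whole difficulty is sharpening this to the stated window $\sum_{j=s-2r+1}^{s}\binom{n-1}{j}$.

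To gain simultaneously the passage from $n$ to $n-1$ and the restriction of the degree window to $[\,s-2r+1,\,s\,]$, I would not bound the $\hat f_i$ directly but instead augment them by helper monomials and count dimensions. Fixing the distinguished coordinate $x_n$, set
\[
\mathcal H=\{\,x_T:\ n\in T,\ |T|\le s\,\}\ \cup\ \{\,x_T:\ n\notin T,\ |T|\le s-2r\,\}.
\]
A direct count shows that $|\mathcal H|$ together with $\sum_{j=s-2r+1}^{s}\binom{n-1}{j}$ exactly fills the dimension $\sum_{j\le s}\binom nj$ of the ambient multilinear space (every $T$ with $|T|\le s$ either contains $n$, or avoids $n$ with $|T|\le s-2r$, or avoids $n$ with $s-2r+1\le|T|\le s$). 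Consequently, if one proves that the combined system $\{\hat f_i\}_{i=1}^m\cup\mathcal H$ is linearly independent, then $m+|\mathcal H|\le\sum_{j\le s}\binom nj$, which rearranges precisely to the desired $m\le\sum_{j=s-2r+1}^{s}\binom{n-1}{j}$. The monomials containing $n$ encode Snevily's $n\to n-1$ reduction, while the low-degree monomials encode the width-$2r$ window.

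The main obstacle is establishing this combined linear independence, and this is the only place the full strength of the hypothesis $\min\{k_i\}>\max\{l_i\}$ is needed. Suppose a dependence $\sum_i\alpha_i\hat f_i+\sum_{T\in\mathcal H}\beta_Tx_T=0$. Evaluating at those $v_j$ with $n\notin A_j$ kills every monomial of $\mathcal H$ containing $x_n$ and, through the diagonal relations, reduces the identity to $\alpha_j f_j(v_j)=-g(v_j)$ with $g$ ranging over the low-degree span $\{x_T:n\notin T,\ |T|\le s-2r\}$. The delicate point is to upgrade this into $\alpha_j=0$ and then $\beta_T=0$: one must show that the degree-$\le s-2r$ helpers cannot conspire with the $\hat f_j$. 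Here I would order the sets by size and exploit that $\min\{k_i\}>\max\{l_i\}$ forces $|A_i\cap A_j|\bmod p<|A_i|\bmod p$ for every $i\ne j$; this strict separation of intersection residues from size residues is exactly what upgrades the merely diagonal structure to a triangular one and lets the width-$2r$ block of low-degree monomials be adjoined without destroying independence. Handling the coordinate $x_n$ simultaneously, so that the containing-$n$ helpers and the low-degree helpers are controlled in one argument rather than interfering, is the technical heart of the proof.
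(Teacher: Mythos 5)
The paper does not actually prove Theorem~\ref{ChenLiu}; it is quoted from Chen and Liu \cite{CL}. The same machinery, however, is what the paper deploys in Section~3 to prove Theorem~\ref{cor} (Cases 1--3), so that is the natural benchmark. Your global architecture matches it: diagonal polynomials $f_i$, a block of helpers attached to the distinguished coordinate $x_n$ of size $\sum_{j\le s-1}\binom{n-1}{j}$, a low-degree block of size $\sum_{j\le s-2r}\binom{n-1}{j}$, and a dimension count inside the multilinear polynomials of degree at most $s$; your bookkeeping showing that independence of the combined system yields $m\le\sum_{j=s-2r+1}^{s}\binom{n-1}{j}$ is correct.

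The genuine gap is exactly at the step you call the ``technical heart,'' and it is not a routine one. With \emph{bare} monomials $x_T$ ($|T|\le s-2r$, $n\notin T$) as low-degree helpers, the evaluation argument breaks: plugging in $v_{A_j}$ leaves an uncontrolled contribution $\sum_T\beta_T\prod_{i\in T}(v_{A_j})_i$ alongside $\alpha_jf_j(v_{A_j})$, and the proposed ``ordering by size / triangular structure'' cannot rescue this, because the intersection sizes are only controlled modulo $p$, not as integers, so no genuine triangularity is available. The missing idea is to multiply each low-degree monomial by the degree-$2r$ annihilator $g(x)=\prod_{h\in K\cup(K-1)}\bigl(\sum_{i=1}^{n-1}x_i-h\bigr)$, which vanishes at \emph{every} $v_{A_j}$ (since $|A_j\cap[n-1]|\equiv k_t$ or $k_t-1\pmod p$); this is why the window has width exactly $2r$, and it is what makes all helper terms disappear upon evaluation so that $\alpha_j=0$ follows. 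One must then prove separately that the polynomials $g(x)\prod_{i\in I}x_i$ are linearly independent, which is done via the Alon--Babai--Suzuki gap lemma (Lemma~\ref{LEM}): the hypothesis $\min\{k_i\}>\max\{l_i\}$ forces $k_1\ge s$, so $(K\cup(K-1)+p\mathbb{Z})\cap[n-1]$ has a gap of size at least $s-2r+2$ below its smallest element $k_1-1$. That --- and not a ``strict separation of residues'' giving a triangular system --- is the actual role of the hypothesis, and your proposal supplies a substitute for neither ingredient. A secondary defect of the same kind: for indices $j$ with $n\in A_j$ your $n$-containing monomials do not vanish at $v_{A_j}$ either; the standard choice $(1-x_n)\prod_{i\in L}x_i$ is what makes that case close.
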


In \cite{LY}, Liu and Yang generalized Theorem~\ref{ChenLiu} under a relaxed condition $k_i > s-r$ for every $i$.

\begin{thm}[Liu and Yang, 2014 \cite{CL}]\label{LiuYang1} Let $p$ be a prime and let $L=\{l_1,l_2,\ldots,l_s\}$ and $K=\{k_1,k_2,\ldots,k_r\}$ be two disjoint subsets of $\{0,1,\ldots,p-1\}$ such that $k_i > s-r$ for every $i$. Suppose that $\A$ is a family of subsets of $[n]$ such that $|A_i|\pmod{p}\in K$ for all $A_i\in \A$ and $|A_i\cap A_j|\pmod{p}\in L$ for every $i\neq j$. Then $|\A|\leq {n-1 \choose s}+{n-1 \choose s-1}+\cdots+{n-1 \choose s-2r+1}$.
\end{thm}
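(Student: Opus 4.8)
The plan is to run the polynomial (linear-algebra) method over $\mathbb{F}_p$, merging the size-class idea of Alon--Babai--Suzuki and Qian--Ray-Chaudhuri with Snevily's device for trading $n$ for $n-1$. To each $A\in\A$ I would attach its characteristic vector $v_A\in\{0,1\}^n\subseteq\mathbb{F}_p^{\,n}$, so that $\langle v_A,x\rangle=\sum_{j\in A}x_j$ and $\langle v_A,v_B\rangle=|A\cap B|$ for $x=(x_1,\dots,x_n)$. The base construction is the Ray-Chaudhuri--Wilson polynomial
\[
f_A(x)=\prod_{l\in L}\bigl(\langle v_A,x\rangle-l\bigr),
\]
of degree $s$: since $K\cap L=\emptyset$ we have $f_A(v_A)\neq0$, while $f_A(v_B)=0$ for $B\neq A$ because $|A\cap B|\bmod p\in L$. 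Reducing modulo the relations $x_j^2=x_j$ (legitimate since all evaluation points lie in $\{0,1\}^n$) replaces each $f_A$ by a multilinear polynomial $\widehat f_A$ of degree $\le s$ with the same values on the $v_B$; the resulting diagonal evaluation matrix forces the $\widehat f_A$ to be linearly independent.

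So far this only reproduces a Frankl--Wilson-type count. The first genuine step is Snevily's reduction from $n$ to $n-1$ variables: using the all-ones form $\langle \mathbf 1,x\rangle=\sum_{j=1}^n x_j$, which is constant modulo $p$ on each size class, I would eliminate the variable $x_n$ and arrange that every $\widehat f_A$ lies in the space $U$ of multilinear polynomials of degree $\le s$ in $x_1,\dots,x_{n-1}$, with $\dim U=\sum_{i=0}^{s}\binom{n-1}{i}$. The second, and decisive, step is to trim the low-degree end. Splitting $\A=\A_1\cup\cdots\cup\A_r$ by the residue $|A|\bmod p$ and inserting size factors $\langle\mathbf 1,x\rangle-k_u$ in the spirit of Alon--Babai--Suzuki, I expect to be able to adjoin to $\{\widehat f_A\}$ all monomials $x_T=\prod_{j\in T}x_j$ with $T\subseteq[n-1]$ and $|T|\le s-2r$ while preserving linear independence. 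There are $\sum_{i=0}^{s-2r}\binom{n-1}{i}$ such monomials, all lying in $U$, so
\[
|\A|+\sum_{i=0}^{s-2r}\binom{n-1}{i}\ \le\ \dim U=\sum_{i=0}^{s}\binom{n-1}{i},
\]
which rearranges to exactly the claimed bound $\sum_{i=s-2r+1}^{s}\binom{n-1}{i}$.

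The two hypotheses should enter precisely at the linear-independence step, which I expect to be the main obstacle. Under $n\ge s+\max_i k_i$ I would follow the Hwang--Kim treatment of the size factors, where this inequality is what keeps the size-adapted polynomials from collapsing; under $n\ge 2s-2r+1$ I would instead run a Qian--Ray-Chaudhuri-style rank computation, the inequality guaranteeing (via $s-2r\le n-s-1$) enough slack for the adjoined monomials $x_T$ to remain independent of the $\widehat f_A$. The delicate point in both regimes is to prove the \emph{sharp} degree-$(s-2r)$ cutoff: neither Snevily's argument (which keeps all degrees down to $0$) nor the Alon--Babai--Suzuki/Qian--Ray-Chaudhuri arguments (which trim only $r$ degrees and work in $n$ variables) yields the $2r$-degree saving together with the $n-1$ reduction, so the real work is to show that the size factors and Snevily's substitution cooperate rather than interfere. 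A secondary check will be the boundary behaviour when $2r>s$, where the sum collapses to Snevily's bound and the statement must remain consistent.
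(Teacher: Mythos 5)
First, a point of calibration: the paper does not prove this statement at all --- it is quoted from Liu and Yang \cite{LY} as prior work --- so your attempt can only be measured on its own merits and against the machinery the paper builds for its Theorems \ref{main} and \ref{cor}. Measured that way, there is a genuine gap, and it sits exactly where you predicted the difficulty would be. The theorem you were asked to prove assumes $k_i>s-r$ for every $i$ and imposes \emph{no} condition on $n$; yet your outline never invokes this assumption, and instead says the linear-independence step will be carried out ``under $n\ge s+\max_i k_i$'' or ``under $n\ge 2s-2r+1$''. Those are the hypotheses of Theorems \ref{cor} and \ref{main} of this paper, not of the statement at hand, so the argument you sketch is aimed at a different theorem. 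In the standard proof (the same scheme as Section 3 of this paper), the auxiliary polynomials are $g_I(x)=\prod_{h\in K\cup(K-1)}\bigl(\sum_{i=1}^{n-1}x_i-h\bigr)\prod_{i\in I}x_i$ with $|I|\le s-2r$, and their independence comes from Lemma \ref{LEM} once the set $(K\cup(K-1)+p\mathbb{Z})\cap[n-1]$ has a gap of size $\ge s-2r+2$. The hypothesis $k_i>s-r$ exists precisely to supply that gap: it forces every element of $K\cup(K-1)$ to be at least $s-r\ge s-2r+1$, so the gap sits below the minimum of the set, with no constraint on $n$ required. Until you locate this as the point where $k_i>s-r$ enters, the decisive independence step --- which you yourself defer as ``the real work'' --- cannot be closed; as written, the proposal asserts the theorem rather than proving it.

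Two further soft spots. (i) The plan to eliminate $x_n$ via the all-ones form and place every $\widehat f_A$ in the space of multilinear polynomials in $x_1,\dots,x_{n-1}$ of degree $\le s$ does not go through as stated: $\langle\mathbf 1,x\rangle$ is not a single constant on $\A$ modulo $p$ (the sets occupy up to $r$ residue classes), so there is no uniform substitution for $x_n$. Snevily's argument, and Section 3 of this paper, instead keep all $n$ variables and adjoin the $\sum_{i=0}^{s-1}\binom{n-1}{i}$ polynomials $(1-x_n)\prod_{i\in L}x_i$, so that the count reads $|\A|+\sum_{i=0}^{s-1}\binom{n-1}{i}+\sum_{i=0}^{s-2r}\binom{n-1}{i}\le\sum_{i=0}^{s}\binom{n}{i}$ and still rearranges to the claimed bound; your dimension count would need to be reformulated along these lines. (ii) Proving that the adjoined family stays independent of the $f_A$'s and of the $(1-x_n)$-family requires the evaluation/substitution arguments (Claims 1--3 in Section 3), which your outline does not supply. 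The overall architecture --- Ray-Chaudhuri--Wilson polynomials, a Snevily-type $n\to n-1$ saving, and a degree-$(s-2r)$ family built from size factors --- is the right one, but the proof is not yet there.
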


In the same paper, they also obtained the same bound under the condition of Theorem~\ref{ABS}.

\begin{thm}[Liu and Yang, 2014 \cite{CL}]\label{LiuYang2} Let $p$ be a prime and let $L=\{l_1,l_2,\ldots,l_s\}$ and $K=\{k_1,k_2,\ldots,k_r\}$ be two disjoint subsets of $\{0,1,\ldots,p-1\}$ such that  $r(s-r+1)\leq p-1$ and $n\geq s +\max_{1\leq i\leq r}k_i$. Suppose that $\A$ is a family of subsets of $[n]$ such that $|A_i|\pmod{p}\in K$ for all $A_i\in \A$ and $|A_i\cap A_j|\pmod{p}\in L$ for every $i\neq j$. Then $|\A|\leq {n-1 \choose s}+{n-1 \choose s-1}+\cdots+{n-1 \choose s-2r+1}$.
\end{thm}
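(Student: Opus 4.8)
The plan is to prove the bound by the linear algebra (polynomial) method over the field $\mathbb{F}_p$, in the spirit of the arguments behind Theorems~\ref{RW}, \ref{ABS} and \ref{Snevily}. Identify each $A_i$ with its characteristic vector $v_i \in \{0,1\}^n \subseteq \mathbb{F}_p^n$, and write $\ell(x) = \sum_{t=1}^n x_t$ for the ``size'' linear form, so that $\langle v_i, v_j\rangle \equiv |A_i \cap A_j| \pmod p$ and $\ell(v_i) \equiv |A_i| \pmod p$. The basic detecting polynomials are $f_i(x) = \prod_{l \in L}(\langle v_i, x\rangle - l)$, which become multilinear of degree $s$ after reduction modulo the relations $x_t^2 = x_t$. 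Since $K$ and $L$ are disjoint subsets of $\{0,\ldots,p-1\}$, every factor $(k_t - l)$ is nonzero in $\mathbb{F}_p$, so $f_i(v_i) \neq 0$, while $f_i(v_j) = 0$ for $j \neq i$ because $|A_i \cap A_j| \bmod p \in L$. This ``diagonal'' property already makes the $f_i$ linearly independent, but it only places them in the space of multilinear polynomials of degree $\le s$, yielding the far weaker bound $\sum_{j=0}^s \binom{n}{j}$. The entire task is to cut the ambient space down to dimension $\sum_{j=s-2r+1}^{s}\binom{n-1}{j}$.

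I would achieve this through two reductions. First, to replace $\binom{n}{j}$ by $\binom{n-1}{j}$, I would use the standard Snevily-type dimension-reduction device (homogenization together with the linear relation carried by $\ell$, equivalently fixing one coordinate), which lowers the effective number of free coordinates from $n$ to $n-1$ and so replaces the ambient space of multilinear polynomials of degree $\le s$ in $n$ variables by one of dimension $\sum_{j\le s}\binom{n-1}{j}$. Second, to shrink the degree window from all of $\{0,1,\ldots,s\}$ to the $2r$ consecutive top degrees $\{s-2r+1,\ldots,s\}$, I would exploit that the members of $\A$ take only $r$ distinct sizes modulo $p$. Because $\ell(v_i) \equiv k_{c(i)}$ for one of $r$ values, one can replace the full degree-$s$ product defining $f_i$ by products over suitable subsets of $L$ whose sizes are governed by the rank of $k_{c(i)}$ among the sizes, and then adjoin a fixed supplementary family of low-degree ``size polynomials'' built from $\ell(x)-k_1,\ldots,\ell(x)-k_r$ (the analogue of Alon--Babai--Suzuki's Lemma~3.6) to fill out the complementary degrees. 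The hypotheses $r(s-r+1)\le p-1$ and $n\ge s+\max_i k_i$ are exactly what guarantee that these supplementary polynomials are well defined and nondegenerate modulo $p$.

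The key step, and the one I expect to be the main obstacle, is to verify that the modified detecting polynomials together with the supplementary family are simultaneously (i)~contained in the small space $U$ spanned by monomials of degree in $\{s-2r+1,\ldots,s\}$ after the $n-1$ reduction, and (ii)~linearly independent, with the total count matching $\dim U = \sum_{j=s-2r+1}^{s}\binom{n-1}{j}$. I would establish independence by a triangular (filtration) argument: order the sets so that $|A_1|\le|A_2|\le\cdots$, and show that in a fixed monomial order the leading term of the polynomial attached to $A_i$ is not cancelled by those of earlier sets or by the supplementary family, using the diagonal property $f_i(v_i)\neq 0$ inherited from the disjointness of $K$ and $L$. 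Controlling the low-degree tail---ensuring that subtracting the supplementary polynomials genuinely removes all monomials of degree below $s-2r+1$ while preserving the diagonal values---is the delicate part, and this is precisely where the arithmetic conditions on $p$, $s$, $r$ and $\max_i k_i$ must be brought to bear.
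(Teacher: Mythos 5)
Your submission is a plan rather than a proof: the two steps you yourself flag as ``the main obstacle'' and ``the delicate part'' --- (i) replacing the $f_i$ by modified detecting polynomials that live in the span of multilinear monomials of degrees $s-2r+1,\dots,s$ in $n-1$ variables while retaining the diagonal property, and (ii) proving the triangular independence of these together with the supplementary family --- are exactly the content of the theorem, and neither is carried out. Moreover, the architecture you propose (cutting the ambient space down to a subspace $U$ of dimension $\sum_{j=s-2r+1}^{s}\binom{n-1}{j}$ and confining the detecting polynomials to it) is not how this bound is obtained, and I see no way to complete it as stated: forcing each $f_i$ into the top $2r$ degrees while keeping $f_i(v_i)\neq0$ and $f_i(v_j)=0$ is essentially equivalent to the theorem itself. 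The standard route, and the one this paper takes for its stronger Theorem~\ref{cor} (which subsumes the Liu--Yang statement by dropping $r(s-r+1)\le p-1$), goes the other way: one keeps the $f_{A_i}$ of degree at most $s$ inside the full multilinear space of dimension $\sum_{i=0}^{s}\binom{n}{i}$ and \emph{enlarges} the independent family by adjoining $\sum_{i=0}^{s-1}\binom{n-1}{i}$ polynomials $q_L(x)=(1-x_n)\prod_{i\in L}x_i$ and $\sum_{i=0}^{s-2r}\binom{n-1}{i}$ polynomials $g_I(x)=\prod_{h\in K\cup(K-1)}\bigl(\sum_{i=1}^{n-1}x_i-h\bigr)\prod_{i\in I}x_i$; the identity $\sum_{i=0}^{s}\binom{n}{i}-\sum_{i=0}^{s-1}\binom{n-1}{i}-\sum_{i=0}^{s-2r}\binom{n-1}{i}=\sum_{i=s-2r+1}^{s}\binom{n-1}{i}$ then yields the bound.

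The specific idea your sketch is missing is the set $K\cup(K-1)$. Your supplementary polynomials are built only from $\ell(x)-k_1,\dots,\ell(x)-k_r$, which is the original Alon--Babai--Suzuki device and can only open a window of width $r$ (giving $\sum_{i=s-r+1}^{s}\binom{n}{i}$), not the width-$2r$ window in $n-1$ variables claimed here. The point is that after distinguishing the element $n$, a set $A$ with $n\in A$ satisfies $|A\cap[n-1]|\equiv k-1\pmod p$ for some $k\in K$, so the forbidden residues for the truncated sum $\sum_{i=1}^{n-1}x_i$ form the set $K\cup(K-1)$ of size at most $2r$; this single observation is what produces the $n-1$ and the $2r$ simultaneously, and it is also where the hypotheses enter (one must verify, via the Alon--Babai--Suzuki gap lemma or the condition $r(s-r+1)\le p-1$ together with $n\ge s+\max_i k_i$, that the $g_I$ remain linearly independent). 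Without this ingredient, and without an actual execution of the independence argument you defer, the proposal does not establish the stated bound.
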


In this paper, we show that Theorem~\ref{LiuYang2} still holds under the Alon, Babai and Suzuki's condition; that is to say, we can drop  the condition $r(s-r+1)\leq p-1$ in Theorem~\ref{LiuYang2}.

\begin{thm}\label{cor}
Let $p$ be a prime and let $L=\{l_1,l_2,\ldots,l_s\}$ and $K=\{k_1,k_2,\ldots,k_r\}$ be two disjoint subsets of $\{0,1,\ldots,p-1\}$. Suppose that $\A$ is a family of subsets of $[n]$ such that $|A_i|\pmod{p}\in K$ for all $A_i\in \A$ and $|A_i\cap A_j|\pmod{p}\in L$ for every $i\neq j$.  If $n\geq s +\max_{1\leq i\leq r}k_i$, then $|\A|\leq {n-1 \choose s}+{n-1 \choose s-1}+\cdots+{n-1 \choose s-2r+1}$.
\end{thm}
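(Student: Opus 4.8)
The plan is to establish the bound by the polynomial (linear algebra) method, attaching to each member of $\A$ a multilinear polynomial over $\f{p}$ and then controlling the dimension of the space these polynomials span. I would first reduce everything modulo $p$: identify each $A_i$ with its characteristic vector $v_i \in \f{p}^n$, and write $\langle x, y\rangle = \sum_{k=1}^{n} x_k y_k$, so that $\langle v_i, v_j\rangle \equiv |A_i \cap A_j| \pmod p$ and $\langle v_i, v_i\rangle \equiv |A_i| \pmod p$. The target ${n-1\choose s}+\cdots+{n-1\choose s-2r+1}$ is exactly the dimension of the space $U$ of multilinear polynomials in $n-1$ variables whose degrees lie in the band $[s-2r+1,\, s]$. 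The whole proof then reduces to two tasks: (i) attach to each $A_i$ a polynomial $g_i$ lying in a fixed isomorphic copy of $U$, and (ii) show that the $g_i$ are linearly independent over $\f{p}$. Together these give $|\A| \le \dim U = \sum_{j=s-2r+1}^{s}\binom{n-1}{j}$.

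For step (i) I would order $K$ as $k_1 < k_2 < \cdots < k_r$, let $\tau(i)$ be the index with $|A_i| \equiv k_{\tau(i)} \pmod p$, and begin from the Frankl--Wilson--type products $f_i(x) = \prod_{l\in L,\, l<k_{\tau(i)}}(\langle x, v_i\rangle - l)$, for which $f_i(v_i) = \prod_{l<k_{\tau(i)}}(k_{\tau(i)} - l)$ is a product of differences of distinct residues and hence nonzero in $\f{p}$. To obtain the two features that separate the claimed bound from the Alon--Babai--Suzuki bound, I would borrow the two devices already present in the proofs of Theorems~\ref{ChenLiu} and \ref{LiuYang2}: a Snevily-type homogenization that introduces one auxiliary coordinate and then, using the linear relation recording each set's size, eliminates a genuine variable, trading each $\binom{n}{j}$ for $\binom{n-1}{j}$; and the mechanism by which those proofs, in exchange for passing to $n-1$ variables, widen the admissible degree band from the $r$ values of Theorem~\ref{ABS} to the $2r$ values $s-2r+1,\ldots,s$. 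After reducing modulo $x_k^2 = x_k$ I would assemble the resulting multilinear polynomials, together with a suitable collection of auxiliary monomials, into a list lying inside a fixed copy of $U$.

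The heart of the argument, and the step I expect to be the main obstacle, is step (ii): proving that the constructed list --- the $g_i$ together with the auxiliary monomials --- is linearly independent \emph{without} assuming $r(s-r+1)\le p-1$. This is precisely the point at which Alon, Babai and Suzuki invoked their Lemma~3.6, and at which Theorem~\ref{LiuYang2} still carries the numerical hypothesis. The plan is to replace the global determinant/diagonal-dominance computation, whose nonvanishing is what forced the condition, by the combinatorial independence argument of Hwang and Kim that underlies Theorem~\ref{HK}: evaluate a hypothetical dependence at the vectors $v_i$, ordered by $\tau(i)$, and peel off the coefficients one size-class at a time, exploiting the triangular structure $f_i(v_i)\ne 0$ and $f_i(v_j)=0$ for the appropriately ordered $j$, so that at no stage does one need to invert a quantity whose nonvanishing depends on $r$, $s$ and $p$. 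The hypothesis $n \ge s + \max_{1\le i\le r} k_i$ enters both here and in step (i): it guarantees enough free coordinates to realize the auxiliary sets and to make the homogenizing substitution nondegenerate, so that the spanning count is exactly $\dim U$ and the peeling terminates.

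Finally I would assemble the pieces: linear independence bounds $m$ by the size of a basis of $U$, and since $\dim U = \sum_{j=s-2r+1}^{s}\binom{n-1}{j}$ this is the asserted inequality. I emphasize that one cannot simply deduce Theorem~\ref{cor} from the Hwang--Kim bound of Theorem~\ref{HK}: for $n \ge 2s-2$ the new bound is strictly smaller, so the independence argument genuinely has to be carried out inside the smaller space $U$ rather than inside the degree-$[s-r+1,s]$ space of Theorem~\ref{HK}, which is exactly why adapting the Hwang--Kim peeling to this narrower band is the crux.
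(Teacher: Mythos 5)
There are two genuine gaps. First, the polynomials you attach to the sets do not work in the modulo-$p$ setting: with the truncated product $f_i(x)=\prod_{l\in L,\,l<k_{\tau(i)}}(\langle x,v_i\rangle-l)$ you only know, for $j\ne i$, that $|A_i\cap A_j|\equiv l\pmod p$ for \emph{some} $l\in L$, and nothing forces that $l$ to be smaller than $k_{\tau(i)}$; hence $f_i(v_j)$ need not vanish and the triangular structure your peeling relies on collapses. Every modular argument of this type, including the paper's, takes the full product $f_{A_j}(x)=\prod_{i=1}^{s}(v_{A_j}x-l_i)$, which annihilates \emph{all} $v_{A_{j'}}$ with $j'\ne j$, so the coefficients of the $f$'s are killed by plain evaluation (plus one extra evaluation at $v_{A_{j}}+(0,\ldots,0,1)$ to handle the distinguished coordinate $x_n$); no Hwang--Kim-style peeling is needed or possible there. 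Second, you never actually construct the auxiliary families whose count produces the bound. The paper adjoins $q_L(x)=(1-x_n)\prod_{i\in L}x_i$ for all $L\subset[n-1]$ with $|L|\le s-1$ (this is the device that trades each $\binom{n}{j}$ for $\binom{n-1}{j}$) and $g_I(x)=\bigl(\prod_{h\in K\cup(K-1)}(\sum_{i=1}^{n-1}x_i-h)\bigr)\prod_{i\in I}x_i$ for all $I\subset[n-1]$ with $|I|\le s-2r$ (the degree-$\le 2r$ factor over $K\cup(K-1)$ is exactly what widens the band from $r$ to $2r$ values), and then counts inside the full space of multilinear polynomials of degree at most $s$ in $n$ variables rather than inside an isomorphic copy of $U$.

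The real crux, which your proposal does not touch, is the linear independence of the $g_I$'s \emph{without} the hypothesis $r(s-r+1)\le p-1$. This is handled by the Alon--Babai--Suzuki gap lemma applied to $H=(K\cup(K-1)+p\mathbb{Z})\cap[n-1]$, which needs a gap of size at least $s-2r+2$; a case analysis on the position of $p+k_1-1$ relative to $s+k_r-1$ and $(s-2r+1)+k_r$ shows the gap exists except when $p+k_1-1\le(s-2r+1)+k_r$. In that exceptional case the polynomial method is abandoned altogether: for $n\ge 2s-2r+1$ the paper invokes Theorem~\ref{main} (proved by the Qian--Ray-Chaudhuri linear-forms method, a different argument again), and for $n\le 2s-2r$ it bounds $|\A|$ directly by $\sum_{i}\binom{n}{k_i}+\sum_{i}\binom{n}{p+k_i}$ and compares this with $r\binom{n}{s}$ using unimodality of the binomial coefficients and the inequality $\binom{n}{k-1-c}+\binom{n}{c}\le\binom{n}{k}$. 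Removing the arithmetic condition lives entirely in this exceptional case, so as written your outline would at best reprove the Liu--Yang statement (Theorem~\ref{LiuYang2}), not Theorem~\ref{cor}.
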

Note that $ {n-1 \choose s}+{n-1 \choose s-1}+\cdots+{n-1 \choose s-2r+1}=  {n \choose s}+{n \choose s-2}+\cdots+{n \choose s-2(r-1)}$ and ${n \choose s-2i} < {n \choose s-i}$ for $1\leq i \leq r-1$ when $n\geq 2s-2$.  Our result strengthens the upper bound of Alon-Babai-Suzuki's conjecture (Theorems~\ref{HK}) when $n\geq 2s-2$.

In the proof of Theorem~\ref{cor}, we first prove that  the bound holds under the condition $n\geq 2s-2r+1$, which relaxes the condition $n\geq 2s-r$ in the theorem of  Qian and Ray-Chaudhuri.

\begin{thm}\label{main}
Let $p$ be a prime and let $L=\{l_1,l_2,\ldots,l_s\}$ and $K=\{k_1,k_2,\ldots,k_r\}$ be two disjoint subsets of $\{0,1,\ldots,p-1\}$. Suppose that $\A$ is a family of subsets of $[n]$ such that $|A_i|\pmod{p}\in K$ for all $A_i\in \A$ and $|A_i\cap A_j|\pmod{p}\in L$ for every $i\neq j$. If $n\geq 2s-2r+1$, then
$|\A|\leq {n-1 \choose s}+{n-1 \choose s-1}+\cdots+{n-1 \choose s-2r+1}$.
\end{thm}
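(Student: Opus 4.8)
The plan is to run the linear-algebra (polynomial) method and to measure $|\A|$ as the dimension of the span of a family of polynomials that I will force to lie in the space of multilinear polynomials in $n-1$ variables whose monomials have degree in the window $\{s-2r+1,\ldots,s\}$; that space has dimension exactly $\binom{n-1}{s}+\cdots+\binom{n-1}{s-2r+1}$.

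First I would encode each $A_i$ by its characteristic vector $v_i\in\{0,1\}^n\subseteq\f{p}^n$, so that $\langle v_i,v_j\rangle\equiv|A_i\cap A_j|\pmod p$ and $\langle v_i,v_i\rangle\equiv|A_i|\pmod p\in K$. Following Snevily, set $g_i(x)=\prod_{t=1}^{s}(\langle v_i,x\rangle-l_t)$. Because $|A_i|\bmod p\notin L$ we have $g_i(v_i)\not\equiv0$, while for $j\ne i$ the value $|A_i\cap A_j|\bmod p\in L$ kills a factor, so $g_i(v_j)=0$; thus $[g_i(v_j)]$ is diagonal with nonzero diagonal and the $g_i$ are linearly independent. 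Hence $|\A|=\dim\operatorname{span}\{g_i\}$, and everything reduces to bounding this dimension from above.

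By Snevily's argument (the mechanism behind Theorem~\ref{Snevily}) the reduced $g_i$ may be taken in the space of multilinear polynomials of degree at most $s$ in $n-1$ variables, of dimension $\sum_{j=0}^{s}\binom{n-1}{j}$. The genuinely new task is to discard the bottom $s-2r+1$ degree levels, i.e.\ to prove that the $g_i$ together with all multilinear monomials of degree at most $s-2r$ (in the $n-1$ reduced variables) are \emph{jointly} linearly independent; the padding monomials number $\sum_{j=0}^{s-2r}\binom{n-1}{j}$, so joint independence inside Snevily's space yields $|\A|\le\sum_{j=s-2r+1}^{s}\binom{n-1}{j}$, as required. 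To establish it I would refine by size class: ordering $K=\{k_1<\cdots<k_r\}$ and, for a set in class $a$, replacing the full product by the shorter Alon--Babai--Suzuki factor $\prod_{l_t<k_a}(\langle v_i,x\rangle-l_t)$, then building a triangular evaluation scheme for the whole system of set-polynomials and padding monomials.

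The \textbf{main obstacle} is exactly the cross-class vanishing that forces the evaluation matrix to be triangular: this is the place where Alon, Babai and Suzuki needed their Lemma~3.6 and the hypothesis $r(s-r+1)\le p-1$, and where their method stalls. I expect to replace that lemma by a direct ordering-and-substitution argument in the spirit of Qian and Ray-Chaudhuri, and it is here that the hypothesis is spent: writing it as $n\ge s+(s-2r+1)$, it guarantees that outside the supports under consideration there are always enough ground-set elements to select the disjoint ``witness'' coordinates that make the triangular scheme go through, exactly as $n\ge2s-r$ supplied this room in Theorem~\ref{QR}. The delicate quantitative point is that the size-class refinement, combined with Snevily's passage from $n$ to $n-1$, lets the padding threshold be pushed all the way down to $s-2r$ rather than the $s-r$ of the unstrengthened bound; confirming this by a careful count of the padding monomials against the degree-$s$ leading terms of the $g_i$ is what I would check last.
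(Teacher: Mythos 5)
Your overall strategy (the multilinear polynomials $g_i$ plus a family of low-degree ``padding'' objects, counted inside the space of multilinear polynomials of degree at most $s$) is a legitimate route and is in fact close to how the paper proves Theorem~\ref{cor}; but as written there is a genuine gap at exactly the step you defer. The claim that the $g_i$ together with all multilinear monomials of degree at most $s-2r$ in the $n-1$ reduced variables are \emph{jointly} linearly independent is the entire content of the theorem, and raw monomials cannot serve as the padding: a monomial $\prod_{i\in I}x_i$ does not vanish at the characteristic vectors $v_j$, so no evaluation or triangularity argument decouples the two families. The standard fix is to multiply each monomial by a swallowing polynomial that vanishes on every $v_j$; in this paper that is $g(x)=\prod_{h\in K\cup(K-1)}\bigl(\sum_{i=1}^{n-1}x_i-h\bigr)$ (degree at most $2r$, which is exactly where the threshold $s-2r$ comes from), and the independence of the resulting family $\{g(x)\prod_{i\in I}x_i\}$ is then obtained from the Alon--Babai--Suzuki gap lemma (Lemma~\ref{LEM}), which needs hypotheses on $n$ relative to $p$ and $K$ that are \emph{not} implied by $n\ge 2s-2r+1$ (hence the four-case analysis in Section~3). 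Your fallback, the shortened factors $\prod_{l_t<k_a}(\langle v_i,x\rangle-l_t)$ with a triangular evaluation scheme, is precisely the ABS mechanism that requires $r(s-r+1)\le p-1$; you acknowledge this obstacle but only ``expect'' to overcome it, so the central step is missing. Likewise, the passage to $n-1$ variables is not a free substitution: it must be implemented by an extra padding family (the $q_L(x)=(1-x_n)\prod_{i\in L}x_i$ in the paper) and separate treatment of the sets containing $n$. Finally, your accounting of where $n\ge 2s-2r+1$ is spent (``room for disjoint witness coordinates'') does not correspond to any concrete step.

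For comparison, the paper proves Theorem~\ref{main} by a dual method in the style of Qian--Ray-Chaudhuri rather than by padding polynomials: it introduces the linear forms $L_I=\sum_{i:I\subset A_i}x_i$ for $I\subseteq[n-1]$ with $|I|\le s$, shows (Proposition~\ref{prop}) that the system $L_I=0$ admits only the trivial solution, so $|\A|\le\dim\langle L_I\rangle$, and then bounds that dimension. The key new ingredient is Lemma~\ref{cri}: the degree-$2r$ polynomial $\prod_j\bigl(x-(k_j-i)\bigr)\prod_j\bigl(x-(k_j-1-i)\bigr)$ --- again built from $K\cup(K-1)$ to absorb the two cases $n\in A$ and $n\notin A$ --- shows that $\sum_{H\supset I,\,|H|=i+2r}L_H$ lies in the span of forms of smaller index, and the hypothesis $n\ge 2s-2r+1$ enters only to guarantee $i+(i+2r)\le n-1$ so that the counting lemma (Lemma~\ref{count}) applies in the inductive dimension bound (Lemma~\ref{recurbound}). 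If you wish to salvage the primal approach, you must import both the $K\cup(K-1)$ swallowing polynomial and the gap lemma, at which point you will find that $n\ge 2s-2r+1$ alone does not always deliver the required gap --- which is precisely why the paper treats this regime by the dual method.
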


%Following the approach in \cite{CL,HK,LY} and employing Theorem~\ref{main}, we can also obtain the same bound under the condition $n\geq s +\max_{1\leq i\leq r}k_i$, which improves the bound in Theorem \ref{HK} when $n$ is large. %The detailed proof is presented in Section 3.

Theorems \ref{ABS}, \ref{QR}, \ref{LiuYang1} and~\ref{LiuYang2} have been extended to $k$-wise $L$-intersecting families in \cite{GS,LY}. With a similar idea, our results can also be extended to the $k$-wise case.

\section{Proof of Theorem \ref{main}}
In this section we prove Theorem \ref{main}, which will be helpful in the proof of Theorem \ref{cor}.

 Throughout this section, let $X=[n-1]=\{1,2,\ldots,n-1\}$ be an $(n-1)$-element set, $p$ be a prime, and let $L=\{l_1,l_2,\ldots,l_s\}$ and $K=\{k_1,k_2,\ldots,k_r\}$ be two disjoint subsets of $\{0,1,\ldots,p-1\}$. Suppose that $\A=\{A_1,A_2,\ldots,A_m\}$ is a family of subsets of $[n]$ such that (1) $|A_i|\pmod{p}\in K$ for every $1\leq i\leq m$, (2) $|A_i\cap A_j|\pmod{p}\in L$ for $i\neq j$. Without loss of generality, assume that there exists a positive integer $t$ such that $n\notin A_i$ for $1\leq i\leq t$ and $n\in A_i$ for $i\geq t+1$. Denote
\[
\mathbb{P}_i(X)=\{S|S\subset X~and~|S|=i\}.
\]
We associate a variable $x_i$ for each $A_i\in \A$ and set $x=(x_1,x_2,\ldots,x_m)$. For each $I\subset X$, define
\[
L_{I}=\sum_{i:I\subset A_i\in \A}x_i.
\]
Consider the system of linear equation over the field $\mathbb{F}_p$:
\begin{eqnarray}\label{ls}
\{L_{I}=0, \textup{\ \ where~$I$~runs~through~}\cup_{i=0}^{s}\mathbb{P}_i(X)\}.
\end{eqnarray}

\begin{prop}\label{prop}
Assume that $L\cap K=\emptyset$. If $\A$ is a mod $p$ $L$-intersecting family with $|A_i|\pmod{p}\in K$ for every $i$, then the only solution of the above system of linear equations is the trivial solution.
\end{prop}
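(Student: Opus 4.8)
The plan is to recast the linear system (\ref{ls}) as a statement about evaluations of low-degree polynomials, and then to show that these evaluations can be arranged into a triangular, hence invertible, matrix, which forces $x=0$.

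First I would set, for each $i$, $B_i = A_i \cap X$ and let $v_i \in \mathbb{F}_p^{\,n-1}$ be the characteristic vector of $B_i$. Since $I \subset X$, we have $I \subset A_i$ iff $I \subset B_i$, and for a $0/1$ vector $\prod_{e \in I}(v_i)_e$ equals $1$ exactly when $I \subset B_i$. Hence $L_I = \sum_i x_i \prod_{e\in I}(v_i)_e$, so the hypothesis that every $L_I$ with $|I|\le s$ vanishes is equivalent to $\sum_i x_i\, m(v_i)=0$ for every multilinear monomial $m$ of degree at most $s$, and therefore (using $(v_i)_e^2=(v_i)_e$ to multilinearly reduce) to
\[
\sum_{i=1}^m x_i\, f(v_i) = 0 \qquad\text{for every } f \in \mathbb{F}_p[z_1,\dots,z_{n-1}] \text{ with } \deg f \le s.
\]
The goal then becomes to exhibit polynomials $f_1,\dots,f_m$, each of degree at most $s$, such that the matrix $F=\big(f_j(v_i)\big)_{j,i}$ is nonsingular over $\mathbb{F}_p$; the displayed relations read $Fx=0$ and so force $x=0$. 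I would build $F$ to be triangular: after fixing an ordering of $\A$, I want $f_j(v_j)\ne 0$ and $f_j(v_i)=0$ for every $i$ preceding $j$. Writing $\sigma_j(z)=\sum_{e\in B_j}z_e$, so that $\sigma_j(v_i)=|B_i\cap B_j|$, the natural candidates are products of linear forms whose value at $v_i$ is $\prod_l\big(|B_i\cap B_j|-l\big)$, a factor vanishing precisely when $|B_i\cap B_j|\bmod p$ hits one of the chosen residues.

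The main obstacle, and the reason $L\cap K=\emptyset$ must be used with care, is the asymmetry created by the distinguished point $n$. Splitting $\A$ into $\A_0=\{A_i:n\notin A_i\}$ and $\A_1=\{A_i:n\in A_i\}$, a short computation of $|B_i\cap B_j|\bmod p$ in the four cases shows: two members of $\A_0$, or one of each group, meet in a residue lying in $L$; two members of $\A_1$ meet in a residue lying in $L-1:=\{l-1:l\in L\}$; while $|B_j|\bmod p$ lies in $K$ for $j\in\A_0$ and in $K-1$ for $j\in\A_1$. A single degree-$s$ product over $L$ thus cannot annihilate both kinds of cross intersection at once. I would resolve this by ordering all of $\A_1$ before all of $\A_0$ and setting
\[
f_j(z)=\prod_{l\in L}\big(\sigma_j(z)-l+1\big)\ \text{ for } j\in\A_1,\qquad f_j(z)=\prod_{l\in L}\big(\sigma_j(z)-l\big)\ \text{ for } j\in\A_0 .
\]
With this choice, for $j\in\A_1$ every earlier set also lies in $\A_1$, so $\sigma_j(v_i)\bmod p\in L-1$ gives $f_j(v_i)=0$, while $|B_j|\bmod p\in K-1$ together with $(K-1)\cap(L-1)=\emptyset$ (equivalent to $K\cap L=\emptyset$) gives $f_j(v_j)\ne 0$; for $j\in\A_0$ every earlier set, now including all of $\A_1$, meets $B_j$ in a residue in $L$, so $f_j(v_i)=0$, while $|B_j|\bmod p\in K$ and $K\cap L=\emptyset$ give $f_j(v_j)\ne 0$. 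Each $f_j$ has degree $s$, so the relations of the first step apply to it; the matrix $F$ is therefore triangular with nonzero diagonal, hence invertible, and $x=0$ follows.

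I expect the only delicate points to be the case analysis for the residues $|B_i\cap B_j|\bmod p$ (in particular verifying that cross-group pairs land in $L$ rather than $L-1$, which is exactly what makes the ``$\A_1$ first'' ordering succeed) and the observation that $L\cap K=\emptyset$ forces the $B_i$ to be pairwise distinct, so that no entry strictly below the diagonal can accidentally be nonzero. It is worth noting that no lower bound on $n$ is needed for this proposition; the hypothesis $n\ge 2s-2r+1$ of Theorem~\ref{main} should enter only in the later counting step.
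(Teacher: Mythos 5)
Your proof is correct and is essentially the paper's own argument in dual form: the paper also splits $\A$ by whether $n\in A_{i_0}$, uses $g(x)=\prod_j(x-l_j)$ for the sets avoiding $n$ and $h(x)=g(x+1)$ for the sets containing $n$ (your $f_j$ are exactly $g(\sigma_j(z))$ and $h(\sigma_j(z))$ expanded in the binomial basis against the $L_I$), and invokes $K\cap L=\emptyset$ for the nonvanishing diagonal terms. The only cosmetic difference is that the paper eliminates the sets not containing $n$ first and then the rest, whereas you order $\A_1$ before $\A_0$; both orderings yield the same triangular structure.
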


\begin{proof}
Let $v=(v_1,v_2,\ldots,v_m)$ be a solution to the system (\ref{ls}). We will show that $v$ is the zero solution over the field $\mathbb{F}_p$. Define $$g(x)=\prod_{j=1}^{s}(x-l_j),$$ and $$h(x)=g(x+1)=\prod_{j=1}^{s}(x+1-l_j).$$
Since ${x \choose 0},{x \choose 1},\ldots,{x \choose s}$ form a basis for the vector space spanned by all the polynomials in $\mathbb{F}_p[x]$ of degree at most $s$, there exist $a_0,a_1,\ldots,a_s\in \mathbb{F}_p$ and $b_0,b_1,\ldots,b_s\in \mathbb{F}_p$ such that
$$g(x)=\sum_{i=0}^{s}a_i{x \choose i},$$ and $$h(x)=\sum_{i=0}^{s}b_i{x \choose i}.$$
Let $A_{i_0}$ be an element in $\A$ with $v_{i_0}\neq 0$.
Next we prove the following identities:

If $n\notin A_{i_0}$, then
\begin{eqnarray}
\sum_{i=0}^{s}a_i\sum_{I\in \mathbb{P}_i(X),I\subset A_{i_0}}L_{I}=\sum_{A_i\in \A}g(|A_i\cap A_{i_0}|)x_i;
\end{eqnarray}

if $n\in A_{i_0}$, then
\begin{align}
\sum_{i=0}^{s}b_i\sum_{I\in \mathbb{P}_i(X),I\subset A_{i_0}}L_{I}&=\sum_{i=1}^{t}h(|A_i\cap A_{i_0}|)x_i+\sum_{i\geq t+1}h(|A_i\cap A_{i_0}|-1)x_i.
%&=\sum_{i=1}^{t}g(|A_i\cap A_{i_0}|+1)x_i+\sum_{i\geq t+1}g(|A_i\cap A_{i_0}|)x_i
\end{align}
We prove them by comparing the coefficients of both sides. For any $A_i\in \A$, the coefficient of $x_{i}$ in the left hand side of $(2)$ is
\[
\sum_{i=0}^{s}a_i|\{I\in \mathbb{P}_i(X):I\subset A_{i_0},I\subset A_i\}|=\sum_{i=0}^{s}a_i{|A_i\cap A_{i_0}| \choose i},
\]
which is equal to $g(|A_i\cap A_{i_0}|)$ by the definition of $a_i$. This proves the identity $(2)$.

For any $i\leq t$, the coefficient of $x_i$ in the left hand side of $(3)$ is
\[
\sum_{i=0}^{s}b_i|\{I\in \mathbb{P}_i(X):I\subset A_{i_0},I\subset A_i\}|=\sum_{i=0}^{s}b_i{|A_i\cap A_{i_0}| \choose i},
\]
for any $i\geq t+1$, the coefficient of $x_i$ in the left hand side of $(3)$ is
\[
\sum_{i=0}^{s}b_i|\{I\in \mathbb{P}_i(X):I\subset A_{i_0},I\subset A_i\}|=\sum_{i=0}^{s}b_i{|A_i\cap A_{i_0}|-1 \choose i}.
\]
This proves the identity $(3)$.

If $n\not\in A_{i_0}$, substituting $x_i$ with $v_i$ for all $i$ in the identity $(2)$, we have
$$\sum_{i=0}^{s}a_i\sum_{I\in \mathbb{P}_i(X),I\subset A_{i_0}}L_{I}(v)=\sum_{A_i\in \A}g(|A_i\cap A_{i_0}|)v_i.$$
It is clear that the left hand side is $0$ since $v$ is a solution to (\ref{ls}). For $A_i\in \A$ with $i\ne i_0$, $|A_i\cap A_{i_0}|\pmod{p}\in L$ and so $g(|A_i\cap A_{i_0}|)=0$. Thus the right hand side of the above identity is equal to $g(|A_{i_0}|)v_{i_0}$. So $g(|A_{i_0}|)v_{i_0}=0$. Since $L\cap K=\emptyset$, we have $g(|A_{i_0}|)\ne 0$ and so $v_{i_0}=0$. This is a contradiction to the definition of $v$.

If $n\in A_{i_0}$, substituting $x_i$ with $v_i$ for all $i$ in the identity $(3)$, we have
\begin{align*}
\sum_{i=0}^{s}b_i\sum_{I\in \mathbb{P}_i(X),I\subset A_{i_0}}L_{I}(v)&=\sum_{i=1}^{t}h(|A_i\cap A_{i_0}|)v_i+\sum_{i\geq t+1}h(|A_i\cap A_{i_0}|-1)v_i\\
&=\sum_{i\geq t+1}h(|A_i\cap A_{i_0}|-1)v_i \textup{ \ \ since $v_i=0$ for all $i\leq t$}.
\end{align*}
Since $h(|A_i\cap A_{i_0}|-1)=g(|A_i\cap A_{i_0}|)$, with a similar argument to the above case, we can deduce the same
contradiction. Then the proposition follows.
\end{proof}

As a result of this proposition, we have:
\[
|\A|\leq \dim(\{L_{I}:I\in\cup_{i=0}^{s}\mathbb{P}_i(X)\}),
\]
where $\dim(\{L_{I}:I\in\cup_{i=0}^{s}\mathbb{P}_i(X)\})$ is defined to be the dimension of the space spanned by $\{L_{I}:I\in\cup_{i=0}^{s}\mathbb{P}_i(X)\}$.
In the remaining of this section, we make efforts to give an upper bound on this dimension.

\begin{lem}\label{cri}
For any $i\in\{0,1,\ldots,s-2r+1\}$ and every $I\in \mathbb{P}_i(X)$, the linear form $$\sum_{H\in \mathbb{P}_{i+2r}(X),I\subset H}L_{H}$$ is linearly dependent on the set of linear forms $\{L_{H}:i\leq |H|\leq i+2r-1,H\subset X\}$ over $\mathbb{F}_p$.
\end{lem}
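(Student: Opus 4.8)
The plan is to produce an explicit dependence by reducing to the ``superset sums'' based at $I$ and then reading off the coefficients from a single polynomial identity over $\mathbb{F}_p$. For $I\subseteq X$ with $|I|=i$ and any $k\geq i$, write $\Lambda^I_k=\sum_{H\in\mathbb{P}_k(X),\,I\subseteq H}L_H$. For $i\le k\le i+2r-1$ the form $\Lambda^I_k$ is a sum of forms $L_H$ with $i\le|H|\le i+2r-1$, hence lies in the span in question, while $\Lambda^I_{i+2r}$ is exactly the form appearing in the statement. Thus it suffices to find $c_i,\dots,c_{i+2r-1}\in\mathbb{F}_p$ with $\Lambda^I_{i+2r}=\sum_{k=i}^{i+2r-1}c_k\Lambda^I_k$. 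Exchanging the order of summation (the same double counting as in the identities $(2)$--$(3)$ above), for each $A_j$ one counts the sets $H$ with $I\subseteq H\subseteq A_j\cap X$ and $|H|=k$, giving
\[
\Lambda^I_k=\sum_{j:\,I\subseteq A_j}\binom{|A_j\cap X|-i}{\,k-i\,}x_j .
\]
Writing $d_j:=|A_j\cap X|-i$, the target reduces to the single pointwise requirement that $\binom{d_j}{2r}=\sum_{m=0}^{2r-1}c_{i+m}\binom{d_j}{m}$ for every $j$ with $I\subseteq A_j$.

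The coefficients will come from one auxiliary polynomial. I would set
\[
Q(x)=\prod_{l=1}^{r}\bigl(x-(k_l-i)\bigr)\bigl(x-(k_l-i-1)\bigr)\in\mathbb{F}_p[x],
\]
a monic polynomial of degree $2r$. The decisive point---and the device that makes the exponent $2r$ appear rather than $r$---is the passage to $X=[n-1]$: since $|A_j\cap X|$ equals $|A_j|$ or $|A_j|-1$ according as $n\notin A_j$ or $n\in A_j$, and $|A_j|\bmod p\in K$, one gets $d_j\equiv k_l-i$ or $d_j\equiv k_l-i-1\pmod p$ for some $l$, so that $Q(d_j)\equiv 0\pmod p$ in either case. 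Thus the ``$n\in A_j$ versus $n\notin A_j$'' dichotomy supplies the second family of roots and doubles the admissible residues from $r$ to $2r$. Expanding $Q$ in the basis $\binom{x}{0},\dots,\binom{x}{2r}$ of polynomials of degree at most $2r$ over $\mathbb{F}_p$, say $Q(x)=\sum_{m=0}^{2r}\gamma_m\binom{x}{m}$ with leading coefficient $\gamma_{2r}\neq0$, and evaluating at $x=d_j$ (legitimate because $\binom{d_j}{m}\bmod p=\binom{d_j\bmod p}{m}$ by Lucas' theorem for $m<p$), the vanishing $Q(d_j)=0$ gives $\binom{d_j}{2r}=-\gamma_{2r}^{-1}\sum_{m=0}^{2r-1}\gamma_m\binom{d_j}{m}$. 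Taking $c_{i+m}=-\gamma_{2r}^{-1}\gamma_m$ then yields $\Lambda^I_{i+2r}=\sum_{m=0}^{2r-1}c_{i+m}\Lambda^I_{i+m}$, the desired dependence.

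The step that needs care, and the main obstacle, is the legitimacy of this binomial-basis expansion: $\binom{x}{m}$ is a genuine degree-$m$ polynomial over $\mathbb{F}_p$ (and $\gamma_{2r}$ is invertible) precisely when $2r\le p-1$. This is exactly what the hypotheses force: $K$ and $L$ being disjoint subsets of the $p$-element set $\{0,\dots,p-1\}$ gives $r+s\le p$, while the range of $i$ is non-empty only when $s\ge 2r-1$; together these yield $3r\le p+1$, hence $2r\le p-1$ for every $p\ge 3$ (the solitary boundary case $p=2$, where necessarily $r=s=1$, being degenerate and disposed of directly). It is worth noting that only the size condition $|A_j|\bmod p\in K$ enters, not the intersection condition, which is consistent with this lemma serving purely as a dimension-counting tool toward the bound $|\A|\le\dim(\{L_I\})$.
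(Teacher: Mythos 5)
Your proof is correct and follows essentially the same route as the paper: the same auxiliary polynomial $\prod_{l=1}^{r}\bigl(x-(k_l-i)\bigr)\bigl(x-(k_l-i-1)\bigr)$, the same expansion in the binomial-coefficient basis with invertible leading coefficient $(2r)!$, and the same use of the dichotomy $n\in A_j$ versus $n\notin A_j$ to supply all $2r$ roots. You merely merge the paper's two cases (constant term zero or nonzero) into a single computation and, unlike the paper, actually justify that $(2r)!\neq 0$ in $\mathbb{F}_p$; the boundary case $p=2$ that you set aside without details is equally unaddressed in the paper's own argument.
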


\begin{proof}
Define $$f(x)=\left(\prod_{j=1}^{r}(x-(k_j-i))\right) \times \left(\prod_{j=1}^{r}(x-(k_j-1-i))\right).$$
We distinguish two cases.
\begin{enumerate}
  \item[(a)] $i\pmod{p}\notin K$ and $i+1\pmod{p}\notin K$ for all $i$. In this case $\forall \ k_j\in K$, $k_j-i\ne0$ and $k_j-i-1\ne0$ in $\mathbb{F}_p$ and so $c=(k_1-i)(k_2-i)\cdots(k_r-i)(k_1-i-1)\cdots(k_r-i-1)\ne0$ in $\mathbb{F}_p$. It is clear that there exist $a_1,a_2,\ldots,a_{2r-1}\in \mathbb{F}_p$, $a_{2r}=(2r)!\in\mathbb{F}_p-\{0\}$ such that $$a_1{x \choose 1}+a_2{x \choose 2}+\cdots+a_{2r}{x \choose 2r}=f(x)-c,$$
      since the polynomial in the right hand side has constant term equal to $0$.
      \par
      Next we show that
      \begin{eqnarray}\label{lem}
      \sum_{j=1}^{2r}a_j\sum_{H\in \mathbb{P}_{i+j}(X),I\subset H}L_{H}=-cL_{I}.
      \end{eqnarray}
      In fact both sides are linear forms in $x_{A}$, for $A\in \A$. The coefficient of $x_{A}$ in the left hand side is $\sum_{j=1}^{2r}a_j|\{H|I\subset H\subset A,n \not \in H, |H|=i+j\}|$. So it is equal to
      \begin{equation*}
        \begin{split}
                  & \begin{cases} 0, \textup{\ \ if $I\not\subset A$;} \\
                  a_1{|A|-i \choose 1}+a_2{|A|-i \choose 2}+\cdots+a_{2r}{|A|-i \choose 2r}, \textup{\ \ if $I\subset A$ and $n\notin A$;}\\
                  a_1{|A|-i-1 \choose 1}+a_2{|A|-i-1 \choose 2}+\cdots+a_{2r}{|A|-i-1 \choose 2r}, \textup{\ \ if $I\subset A$ and $n\in A$.}\\
                   \end{cases}
         \end{split}
       \end{equation*}
       By the above polynomial identity,
      \[
       \sum_{j=1}^{2r}a_j{|A|-i \choose j} =f(|A|-i)-c=-c \textup{\ \ since~}|A|\pmod{p}\in K;
      \]
      \[
      \sum_{j=1}^{2r} a_j{|A|-i-1 \choose j}=f(|A|-i-1)-c=-c \textup{\ \ since~}|A|\pmod{p}\in K.
      \]
      The coefficient of $x_{A}$ in the right hand side is obviously the same. This proves (\ref{lem}).

      Writing (\ref{lem}) in a different way, we have
      \[
      \sum_{H\in\mathbb{P}_{i+2r}(X),I\subset H}L_{H}=-\frac{1}{(2r)!}(cL_{I}+\sum_{j=1}^{2r-1}a_j\sum_{H\in\mathbb{P}_{i+j}(X),I\subset H}L_H).
      \]
      This proves the lemma in case (a).

  \item[(b)] $i\pmod{p}\in K$ or $i+1\pmod{p}\in K$ for some $i$. In this case, the constant term of $(x-(k_1-i))(x-(k_2-i))\cdots(x-(k_r-i))(x-(k_1-i-1))\cdots(x-(k_r-i-1))$ is $0\in \mathbb{F}_p$. So there exists $a_1,a_2,\ldots,a_{2r-1}\in \mathbb{F}_p$, $a_{2r}=(2r)!\in\mathbb{F}_p-\{0\}$ such that

       \[
      a_1{x \choose 1}+a_2{x \choose 2}+\cdots+a_{2r}{x \choose 2r}=f(x)
      \]
      As a consequence we have
      \[
      \sum_{j=1}^{2r}a_j\sum_{H\in \mathbb{P}_{i+j}(X),I\subset H}L_{H}=0~~\forall I\in\mathbb{P}_i(X),
      \]
      i.e. we have
      \[
      \sum_{H\in\mathbb{P}_{i+2r}(X),I\subset H}L_{H}=-\frac{1}{(2r)!}(\sum_{j=1}^{2r-1}a_j\sum_{H\in\mathbb{P}_{i+j}(X),I\subset H}L_H).
      \]
      This finishes the proof of this lemma.
\end{enumerate}
\end{proof}

\begin{cor}\label{cricor} With the same condition as in Lemma~\ref{cri}, we have
\begin{align*}\langle &L_H:H\in\cup_{j=i}^{i+2r-1}\mathbb{P}_j(X)\rangle\\
&=\left\langle L_H:H\in\cup_{j=i}^{i+2r-1}\mathbb{P}_j(X)\right\rangle+\left\langle\sum_{H\in\mathbb{P}_{i+2r}(X),I\subset H}L_H:I\in\mathbb{P}_i(X)\right\rangle
\end{align*}
Here ${\langle L_H:H\in\cup_{j=i}^{i+2r-1}\mathbb{P}_j(X)\rangle}$ is the vector space spanned by $\{ L_H:H\in\cup_{j=i}^{i+2r-1}\mathbb{P}_j(X)\}$.
\end{cor}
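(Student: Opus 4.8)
The plan is to deduce Corollary \ref{cricor} directly from Lemma \ref{cri}, observing that the two sides of the claimed identity differ only by the extra summand $W := \langle \sum_{H \in \mathbb{P}_{i+2r}(X), I \subset H} L_H : I \in \mathbb{P}_i(X) \rangle$ appearing on the right. First I would record the elementary fact that, for subspaces $U$ and $W$ of a fixed ambient space, the identity $U = U + W$ holds if and only if $W \subseteq U$. Writing $U := \langle L_H : H \in \cup_{j=i}^{i+2r-1}\mathbb{P}_j(X)\rangle$ for the common first term of both sides, the corollary is therefore equivalent to the single containment $W \subseteq U$.

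Next I would reduce $W \subseteq U$ to a statement about generators: a span is contained in a subspace precisely when each of its spanning vectors lies in that subspace, so it suffices to check that every generator $\sum_{H \in \mathbb{P}_{i+2r}(X), I \subset H} L_H$, as $I$ ranges over $\mathbb{P}_i(X)$, belongs to $U$. But this is exactly the conclusion of Lemma \ref{cri}, whose hypotheses (in particular the range $i \in \{0,1,\ldots,s-2r+1\}$) are inherited verbatim here: that lemma asserts that for every such $I$ the form $\sum_{H \in \mathbb{P}_{i+2r}(X), I \subset H} L_H$ is linearly dependent on $\{L_H : i \le |H| \le i+2r-1,\, H \subset X\}$, i.e. lies in $U$. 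Hence $W \subseteq U$, and the asserted identity follows at once.

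There is essentially no obstacle in the corollary itself; all the substantive work has already been carried out in Lemma \ref{cri}, where the case split according to whether $i \bmod p$ or $(i+1)\bmod p$ meets $K$ governs whether the auxiliary polynomial $f(x)$ has a nonzero constant term, and hence whether the recorded relation retains the $L_I$ term. For the corollary none of this internal structure is needed; I only require the membership statement, so the argument is a one-line reformulation. The single point worth stating explicitly is the subspace criterion $U = U + W \iff W \subseteq U$, which makes transparent why adjoining the collapsed top-level forms $\sum_{H \in \mathbb{P}_{i+2r}(X), I \subset H} L_H$ enlarges nothing, and which is the form in which the corollary will subsequently feed into the dimension estimate for $\langle L_I : I \in \cup_{i=0}^s \mathbb{P}_i(X)\rangle$.
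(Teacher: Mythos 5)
Your argument is correct and is exactly the reasoning the paper leaves implicit: the corollary is stated without proof precisely because Lemma~\ref{cri} shows each generator $\sum_{H\in\mathbb{P}_{i+2r}(X),\,I\subset H}L_H$ already lies in $U=\langle L_H:H\in\cup_{j=i}^{i+2r-1}\mathbb{P}_j(X)\rangle$, so adjoining their span changes nothing. Your explicit appeal to the criterion $U=U+W\iff W\subseteq U$ is a clean way to record this.
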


The rest of the proof is similar to the proof of Theorem~\ref{QR} given by Qian and Ray-Chaudhuri \cite{QR}.  The next lemma is a restatement of \cite[Lemma~2]{QR}, and is used to prove Lemma~\ref{recurbound}.

\begin{lem}\label{count}
For any positive integers $u,v$ with $u < v <p$ and $u+v \leq n-1$, we have
$$\dim\left(\frac{\langle L_J:J\in \mathbb{P}_v(X)\rangle}{\langle \sum_{J\in \mathbb{P}_v(X), I \subset J}L_J:I\in \mathbb{P}_u(X)\rangle}\right)\leq {n-1\choose v}-{n-1\choose u}.$$
Here $\frac{A}{B}$ is the quotient space of two vector spaces $A$ and $B$ with $B\leq A$.
\end{lem}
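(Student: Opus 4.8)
The plan is to peel away the dependence on the family $\A$ and reduce the statement to a rank computation for a purely combinatorial inclusion matrix. Write $A=\langle L_J:J\in\mathbb{P}_v(X)\rangle$ for the numerator and $B=\langle M_I:I\in\mathbb{P}_u(X)\rangle$ for the denominator, where $M_I=\sum_{J\in\mathbb{P}_v(X),\,I\subset J}L_J$. Since each $M_I$ is an explicit $\mathbb{F}_p$-linear combination of the $L_J$, we have $B\subseteq A$ and hence $\dim(A/B)=\dim A-\dim B$. As $A$ is spanned by the $\binom{n-1}{v}$ forms $L_J$, we have $\dim A\le\binom{n-1}{v}$, so the whole content of the lemma is a good lower bound on $\dim B$.

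To obtain such a bound, I would introduce the free vector space $U=\mathbb{F}_p^{\mathbb{P}_v(X)}$ with basis $\{e_J:J\in\mathbb{P}_v(X)\}$ and the surjective linear map $\psi:U\to A$ determined by $\psi(e_J)=L_J$. Setting $N_I=\sum_{J\in\mathbb{P}_v(X),\,I\subset J}e_J$ and $W=\langle N_I:I\in\mathbb{P}_u(X)\rangle\subseteq U$, we have $\psi(N_I)=M_I$ and therefore $B=\psi(W)$. The third isomorphism theorem gives $A/B\cong U/(W+\ker\psi)$, whence
\[
\dim(A/B)=\binom{n-1}{v}-\dim(W+\ker\psi)\le\binom{n-1}{v}-\dim W .
\]
Thus it suffices to prove $\dim W=\binom{n-1}{u}$, a quantity that no longer refers to $\A$ at all.

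Now $\dim W=\binom{n-1}{u}$ is exactly the assertion that the vectors $N_I$ are linearly independent in $U$, i.e.\ that the $u$-versus-$v$ inclusion matrix over $\mathbb{F}_p$ (rows indexed by $\mathbb{P}_u(X)$, columns by $\mathbb{P}_v(X)$, with a $1$ in position $(I,J)$ when $I\subset J$) has full row rank $\binom{n-1}{u}$; equivalently, the ``up'' operator $(c_I)_{I\in\mathbb{P}_u(X)}\mapsto\bigl(\sum_{I\subset J}c_I\bigr)_{J\in\mathbb{P}_v(X)}$ is injective. Over $\mathbb{Q}$ this is the classical Gottlieb full-rank theorem, which applies precisely because $u+v\le n-1$ forces $\binom{n-1}{u}\le\binom{n-1}{v}$. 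The purpose of the hypothesis $v<p$ is to guarantee that the full rank survives reduction modulo $p$: by Wilson's diagonal (Smith normal) form for inclusion matrices, the nonzero elementary divisors are, up to sign, the binomial coefficients $\binom{v-j}{u-j}$ for $0\le j\le u$, and since $v<p$ every such coefficient has all its numerator factors smaller than $p$ and is therefore a unit in $\mathbb{F}_p$; hence no elementary divisor is divisible by $p$ and the $p$-rank equals the rational rank $\binom{n-1}{u}$.

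The main obstacle is precisely this last mod-$p$ rank statement together with the role of $v<p$ in it: over $\mathbb{F}_p$ an inclusion matrix can be rank-deficient, and one must show that the stated hypotheses preclude this. I expect to settle it either by invoking Wilson's elementary-divisor computation as above, or---exploiting the fact that Lemma~\ref{count} is a restatement of \cite[Lemma~2]{QR}---by reproducing Qian and Ray-Chaudhuri's self-contained induction, which propagates the relation from the $v$-level down one layer at a time and uses $v<p$ to keep the relevant multiplicities invertible. Everything else (the quotient bookkeeping and the estimate $\dim A\le\binom{n-1}{v}$) is routine, so the proof stands or falls on the full-rank-mod-$p$ claim.
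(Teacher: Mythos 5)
Your argument is correct, but note that the paper does not actually prove this lemma: it is imported verbatim as Lemma~2 of Qian and Ray-Chaudhuri \cite{QR}, so the only meaningful comparison is with their proof. Your reduction is the right one and is essentially theirs: passing to the free space $U=\mathbb{F}_p^{\mathbb{P}_v(X)}$, using $A/B\cong U/(W+\ker\psi)$ so that the unknown kernel coming from the family $\A$ can only help, and thereby boiling the whole lemma down to the assertion that the $u$-versus-$v$ inclusion matrix of an $(n-1)$-set has full row rank $\binom{n-1}{u}$ over $\mathbb{F}_p$. You also deploy each hypothesis exactly where it is needed: $u<v$ and $u+v\le n-1$ give the rational full rank (Gottlieb), and $v<p$ guarantees that the nonzero elementary divisors $\binom{v-j}{u-j}$, being divisors of $(v-j)!$ with $v-j<p$, are units in $\mathbb{F}_p$, so the rank survives reduction. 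The only divergence is in how the mod-$p$ rank fact is certified: Qian and Ray-Chaudhuri establish it by their own elementary argument (their Lemma~1), whereas you invoke Wilson's Smith-normal-form computation for inclusion matrices, which is a heavier but standard black box; either is acceptable, though in a written version you should give a precise reference for the diagonal form (R.~M. Wilson, \emph{A diagonal form for the incidence matrices of $t$-subsets vs.\ $k$-subsets}, European J. Combin. 11 (1990), 609--615), since that theorem carries the entire weight of the mod-$p$ step. I see no gap beyond that citation.
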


\begin{lem}\label{recurbound}
For any $i\in \{0,1,\ldots,s-2r+1\}$,
\[
{n-1 \choose i}+{n-1\choose i+1}+\cdots+{n-1\choose i+2r-1}+\dim\left(\frac{\langle L_H:H\in\cup_{j=i}^{s}\mathbb{P}_j(X)\rangle}{\langle L_H:H\in\cup_{j=i}^{i+2r-1}\mathbb{P}_j(X)\rangle}\right)
\]
\[
\leq{n-1\choose s-2r+1}+{n-1\choose s-2r+2}+\cdots+{n-1\choose s}.
\]
\end{lem}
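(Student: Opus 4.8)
The plan is to prove the inequality by downward induction on $i$, starting from $i=s-2r+1$ and descending to $i=0$. Write $U_j=\langle L_H:H\in\mathbb{P}_j(X)\rangle$, $V_i=\langle L_H:H\in\cup_{j=i}^{s}\mathbb{P}_j(X)\rangle$ and $W_i=\langle L_H:H\in\cup_{j=i}^{i+2r-1}\mathbb{P}_j(X)\rangle$, so that $V_i=\sum_{j=i}^{s}U_j$ and $W_i=\sum_{j=i}^{i+2r-1}U_j$, and set $B(i)=\dim(V_i/W_i)$, the quotient dimension appearing in the statement. Denote by $\Phi(i)$ the whole left-hand side $\sum_{j=i}^{i+2r-1}{n-1\choose j}+B(i)$. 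For the base case $i=s-2r+1$ we have $i+2r-1=s$, so the two index ranges coincide, $V_i=W_i$, hence $B(s-2r+1)=0$ and $\Phi(s-2r+1)$ equals the right-hand side exactly. It therefore suffices to prove the monotonicity $\Phi(i)\le\Phi(i+1)$ for every $i\in\{0,1,\ldots,s-2r\}$, since chaining these inequalities up to $i=s-2r+1$ gives the claim.

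Comparing the two binomial windows, which differ only by dropping ${n-1\choose i}$ and adding ${n-1\choose i+2r}$, the inequality $\Phi(i)\le\Phi(i+1)$ is equivalent to
\[
B(i)\le B(i+1)+{n-1\choose i+2r}-{n-1\choose i}.
\]
Since $i\le s-2r$ throughout the inductive step, we have $i+2r\le s$, so $U_{i+2r}\subseteq V_i$ and the chain $W_i\subseteq W_i+U_{i+2r}\subseteq V_i$ makes sense. I would then split the quotient dimension along this chain as
\[
B(i)=\dim\!\left(\frac{V_i}{W_i+U_{i+2r}}\right)+\dim\!\left(\frac{W_i+U_{i+2r}}{W_i}\right),
\]
and bound the two summands separately.

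For the first summand, composing the inclusion $V_{i+1}\subseteq V_i$ with the quotient map onto $V_i/(W_i+U_{i+2r})$ gives a linear map that is surjective, because $V_{i+1}+W_i+U_{i+2r}=V_i$ (indeed $V_{i+1}+W_i=\sum_{j=i}^{s}U_j=V_i$), and whose kernel contains $W_{i+1}$, since $W_{i+1}=\sum_{j=i+1}^{i+2r}U_j$ lies both in $V_{i+1}$ and in $W_i+U_{i+2r}$. Hence it factors through a surjection of $V_{i+1}/W_{i+1}$ onto $V_i/(W_i+U_{i+2r})$, so $\dim(V_i/(W_i+U_{i+2r}))\le B(i+1)$. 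For the second summand, Corollary~\ref{cricor} tells us that every summed form $\sum_{H\in\mathbb{P}_{i+2r}(X),\,I\subset H}L_H$ with $I\in\mathbb{P}_i(X)$ already lies in $W_i$, hence in $U_{i+2r}\cap W_i$; consequently the image of $U_{i+2r}$ modulo $W_i$ is a quotient of $U_{i+2r}$ modulo the span of these summed forms, and Lemma~\ref{count} with $u=i$ and $v=i+2r$ bounds its dimension by ${n-1\choose i+2r}-{n-1\choose i}$. Adding the two estimates yields exactly the displayed inequality and closes the induction.

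The step I expect to demand the most care is checking that Lemma~\ref{count} is legitimately applicable at each index of the induction, where it requires $u<v<p$ and $u+v\le n-1$. The conditions $u<v<p$ hold because $0\le i<i+2r$ and, as $K$ and $L$ are disjoint subsets of a $p$-element set, $s+r\le p$, forcing $i+2r\le s\le p-r<p$; meanwhile the crucial arithmetic constraint $u+v=2i+2r\le n-1$ is tightest at the largest index $i=s-2r$, where it reads $2s-2r\le n-1$. This is precisely the hypothesis $n\ge 2s-2r+1$ of Theorem~\ref{main}, so the assumption on $n$ is exactly what licenses the decisive application of Lemma~\ref{count}. The only remaining routine point is to verify the containments $W_{i+1}\subseteq W_i+U_{i+2r}$ and $V_{i+1}+W_i=V_i$ at the level of index ranges, which I would read off directly from the expressions of $V_i$ and $W_i$ as sums of the $U_j$.
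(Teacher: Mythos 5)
Your proof is correct and follows essentially the same route as the paper: the same induction with base case $i=s-2r+1$, the same splitting of the quotient through the intermediate space $\langle L_H:H\in\cup_{j=i}^{i+2r}\mathbb{P}_j(X)\rangle=W_i+U_{i+2r}$, the same use of Corollary~\ref{cricor} to reduce to Lemma~\ref{count} with $u=i$, $v=i+2r$, and the same verification that $2i+2r\leq 2s-2r\leq n-1$. The surjection $V_{i+1}/W_{i+1}\twoheadrightarrow V_i/(W_i+U_{i+2r})$ you construct is just a repackaging of the paper's inequality $\dim((A+B)/(A+C))\leq\dim(B/C)$, so no substantive difference.
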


\begin{proof}
We induct on $s-2r+1-i$. It is clearly true when $s-2r+1-i=0$. Suppose the lemma holds for $s-2r+1-i<l$ for some positive integer $l$. Now we want to show that it holds for $s-2r+1-i=l$.

We observe that $i+i+2r\leq(s-2r)+(s-2r)+2r\leq n-1$ by the condition in the theorem. By Corollary~\ref{cricor} and Lemma~\ref{count}, we have
\begin{align*}
\dim&\left(\frac{\langle L_H:H\in\cup_{j=i}^{i+2r}\mathbb{P}_j(X)\rangle}{\langle L_H:H\in\cup_{j=i}^{i+2r-1}\mathbb{P}_j(X)\rangle}\right)\\
&=\dim\left(\frac{\langle L_H:H\in \cup_{j=i}^{i+2r-1}\mathbb{P}_j(X)\rangle+\langle L_H:H\in \mathbb{P}_{i+2r}(X)\rangle}{\langle L_H:H\in\cup_{j=i}^{i+2r-1}\mathbb{P}_j(X)\rangle+\langle\sum_{H\in\mathbb{P}_{i+2r}(X),I\subset H}L_H:I\in\mathbb{P}_i(X)\rangle}\right) \\
&\leq \dim\left(\frac{L_H:H\in \mathbb{P}_{i+2r}(X)}{\sum_{H\in\mathbb{P}_{i+2r}(X),I\subset H}L_H:I\in\mathbb{P}_i(X)}\right)\\
&\leq{n-1\choose i+2r}-{n-1\choose i}.
\end{align*}
Now we are ready to prove the lemma.
\begin{align*}
{n-1 \choose i}&+{n-1\choose i+1}+\cdots+{n-1\choose i+2r-1}+\dim\left(\frac{\langle L_H:H\in\cup_{j=i}^{s}\mathbb{P}_j(X)\rangle}{\langle L_H:H\in\cup_{j=i}^{i+2r-1}\mathbb{P}_j(X)\rangle}\right) \\
=&{n-1 \choose i}+{n-1\choose i+1}+\cdots+{n-1\choose i+2r-1}+\dim\left(\frac{\langle L_H:H\in\cup_{j=i}^{i+2r}\mathbb{P}_j(X)\rangle}{\langle L_H:H\in\cup_{j=i}^{i+2r-1}\mathbb{P}_j(X)\rangle}\right)\\
&+\dim\left(\frac{\langle L_H:H\in\cup_{j=i}^{s}\mathbb{P}_j(X)\rangle}{\langle L_H:H\in\cup_{j=i}^{i+2r}\mathbb{P}_j(X)\rangle}\right)\\
=&{n-1 \choose i}+{n-1\choose i+1}+\cdots+{n-1\choose i+2r-1}+\dim\left(\frac{\langle L_H:H\in\cup_{j=i}^{i+2r}\mathbb{P}_j(X)\rangle}{\langle L_H:H\in\cup_{j=i}^{i+2r-1}\mathbb{P}_j(X)\rangle}\right)\\
&+\dim\left(\frac{\langle L_H:H\in\mathbb{P}_i(X)\rangle+\langle L_H:H\in\cup_{j=i+1}^{s}\mathbb{P}_j(X)\rangle}{\langle L_H:H\in\mathbb{P}_i(X)\rangle+\langle L_H:H\in\cup_{j=i+1}^{i+2r}\mathbb{P}_j(X)\rangle}\right) \\
\leq&{n-1 \choose i}+{n-1\choose i+1}+\cdots+{n-1\choose i+2r-1}+\dim\left(\frac{\langle L_H:H\in\cup_{j=i}^{i+2r}\mathbb{P}_j(X)\rangle}{\langle L_H:H\in\cup_{j=i}^{i+2r-1}\mathbb{P}_j(X)\rangle}\right)\\
&+\dim\left(\frac{\langle L_H:H\in\cup_{j=i+1}^{s}\mathbb{P}_j(X)\rangle}{\langle L_H:H\in\cup_{j=i+1}^{i+2r}\mathbb{P}_j(X)\rangle}\right) \\
\leq&{n-1 \choose i}+{n-1\choose i+1}+\cdots+{n-1\choose i+2r-1}+{n-1\choose i+2r}-{n-1\choose i}\\
&+\dim\left(\frac{\langle L_H:H\in\cup_{j=i+1}^{s}\mathbb{P}_j(X)\rangle}{\langle L_H:H\in\cup_{j=i+1}^{i+2r}\mathbb{P}_j(X)\rangle}\right)\\
=&{n-1\choose i+1}+\cdots+{n\choose i+2r}+\dim\left(\frac{\langle L_H:H\in\cup_{j=i+1}^{s}\mathbb{P}_j(X)\rangle}{\langle L_H:H\in\cup_{j=i+1}^{i+2r}\mathbb{P}_j(X)\rangle}\right)\\
\leq&{n-1\choose s-2r+1}+\cdots+{n-1\choose s},
\end{align*}
where the last step follows from the induction hypothesis since $s-2r+1-(i+1)<l$.
\end{proof}

We are now turning to the proof of Theorem \ref{main}.
\begin{proof}
\begin{align*}
|\A|\leq &\dim(\langle L_H:H\in\cup_{i=0}^{s}\mathbb{P}_i(X)\rangle)\\
\leq &\dim(\langle L_H:H\in\cup_{i=0}^{2r-1}\mathbb{P}_i(X)\rangle)+\dim\left(\frac{\langle L_H:H\in\cup_{i=0}^{s}\mathbb{P}_j(X)\rangle}{\langle L_H:H\in\cup_{i=0}^{2r-1}\mathbb{P}_j(X)\rangle}\right)\\
\leq &{n-1\choose 0}+{n-1\choose 1}+\cdots+{n-1\choose 2r-1}+\dim\left(\frac{\langle L_H:H\in\cup_{i=0}^{s}\mathbb{P}_j(X)\rangle}{\langle L_H:H\in\cup_{i=0}^{2r-1}\mathbb{P}_j(X)\rangle}\right)\\
\leq &{n-1\choose s-2r+1}+{n-1\choose s-2r+2}+\cdots+{n-1\choose s} \textup{\ \ by taking $i=0$ in Lemma~\ref{recurbound}},
\end{align*}
which completes the proof of the theorem.
\end{proof}

\section{Proof of Theorem \ref{cor}}
Throughout this section, we let $p$ be a prime and we will use $x=(x_1,x_2,\ldots,x_n)$ to denote a vector of $n$ variables with each variable $x_i$ taking values $0$ or $1$. A polynomial $f(x)$ in $n$ variables $x_i$, for $1\leq i\leq n$, is called {\it multilinear} if the power of each variable $x_i$ in each term is at most one. Clearly, if each variable $x_i$ only takes the values $0$ or $1$, then any polynomial in variable $x$ can be regarded as multilinear. For a subset $A$ of $[n]$, we define the incidence vector $v_{A}$ of $A$ to be the vector $v=(v_1,v_2,\ldots,v_n)$ with $v_i=1$ if $i\in A$ and $v_i=0$ otherwise.

Let $L=\{l_1,l_2,\ldots,l_s\}$ and $K=\{k_1,k_2,\ldots,k_r\}$ be two disjoint subsets of $\{0,1,\ldots,p-1\}$,  where the elements of $K$ are arranged in increasing order. Suppose that $\A=\{A_1,\ldots,A_m\}$ is the family of subsets of $[n]$ satisfying the conditions in Theorem \ref{cor}.
Without loss of generality, we may assume that $n\in A_j$ for $j\geq t+1$ and $n\notin A_j$ for $1\leq j\leq t$.

For each $A_j\in\A$, define
\[
f_{A_j}(x)=\prod_{i=1}^{s}(v_{A_j}x-l_i),
\]
where $x=(x_1,x_2,\ldots,x_n)$ is a vector of $n$ variables with each variable $x_i$ taking values $0$ or $1$. Then each $f_{A_j}(x)$ is a multilinear polynomial of degree at most $s$.

Let $Q$ be the family of subsets of $[n-1]$ with sizes at most $s-1$. Then $|Q|=\sum_{i=0}^{s-1}{n-1\choose i}$. For each $L\in Q$, define
\[
q_{L}(x)=(1-x_n)\prod_{i\in L}x_i.
\]
Then each $q_L(x)$ is a multilinear polynomial of degree at most $s$.

Denote $K-1=\{k_i-1 |k_i\in K\}$. Then $|K\cup(K-1)|\leq 2r$. Set
\[
g(x)=\prod_{h\in K\cup(K-1)}\left(\sum_{i=1}^{n-1}x_i-h\right).
\]
Let $W$ be the family of subsets of $[n-1]$ with sizes at most $s-2r$. Then $|W|=\sum_{i=0}^{s-2r}{n-1\choose i}$. For each $I\in W$, define
\[
g_I(x)=g(x)\prod_{i\in I}x_i.
\]
Then each $g_I(x)$ is a multilinear polynomial of degree at most $s$.

We want to show that the polynomials in
\[
\{f_{A_i(x)}|1\leq i\leq m\}\cup\{q_L(x)|L\in Q\}\cup\{g_I(x)|I\in W\}
\]
are linearly independent over the field $\mathbb{F}_p$. Suppose that we have a linear combination of these polynomials that equals $0$:
\begin{eqnarray}\label{linear}
\sum_{i=1}^{m}a_if_{A_i}(x)+\sum_{L\in Q}b_Lq_L(x)+\sum_{I\in W}u_Ig_I(x)=0,
\end{eqnarray}
with all coefficients $a_i,b_L$ and $u_I$ being in ${\mathbb F}_p$.

\textbf{Claim $1$.} $a_i=0$ for each $i$ with $n\in A_i$.

Suppose, to the contrary, that $i_0$ is a subscript such that $n\in A_{i_0}$ and $a_{i_0}\ne 0$. Since $n\in A_{i_0}$, $q_{L}(v_{A_{i_0}})=0$ for every $L\in Q$. Recall that $f_{A_j}(v_{i_0})=0$ for $j\ne i_0$ and $g(v_{i_0})=0$. By evaluating (\ref{linear}) with $x=v_{A_{i_0}}$, we obtain that $a_{i_0}f_{A_{i_0}}(v_{A_{i_0}})=0\pmod{p}$. Since $f_{A_{i_0}}(v_{A_{i_0}})\ne 0$, we have $a_{i_0}=0$, a contradiction. Thus, Claim 1 holds.

\textbf{Claim $2$.} $a_i=0$ for each $i$ with $n\not \in A_i$. Applying Claim 1, we get
\begin{eqnarray}\label{linear2}
\sum_{i=1}^{t}a_if_{A_i}(x)+\sum_{L\in Q}b_Lq_L(x)+\sum_{I\in W}u_Ig_I(x)=0.
\end{eqnarray}

Suppose, to the contrary, that $i_0$ is a subscript such that $n\notin A_{i_0}$ and $a_{i_0}\ne 0$. Let $v_{i_0}'=v_{i_0}+(0,0,\ldots,0,1)$. Then $q_L(v_{i_0}')=0$ for every $L\in Q$. Note that $f_{A_j}(v_{i_0}')=f_{A_j}(v_{i_0})$ for each $j$ with $n\notin A_j$ and $g(v_{i_0}')=0$. By evaluating (\ref{linear2}) with $x=v_{i_0}'$, we obtain $a_{i_0}f_{A_{i_0}}(v_{i_0}')=a_{i_0}f_{A_{i_0}}(v_{i_0})=0\pmod{p}$ which implies $a_{i_0}=0$, a contradiction. Thus, the claim is verified.

\textbf{Claim $3$.} $b_L=0$ for each $L\in Q$.

By Claims $1$ and $2$, we obtain
\begin{eqnarray}\label{linear3}
\sum_{L\in Q}b_Lq_L(x)+\sum_{I\in W}u_Ig_I(x)=0.
\end{eqnarray}
Set $x_n=0$ in (\ref{linear3}), then
\[
\sum_{L\in Q}b_L\prod_{i\in L}x_i+\sum_{I\in W}u_Ig_I(x)=0.
\]
Subtracting the above equality from (\ref{linear3}), we get
\[
\sum_{L\in Q}b_L\left(x_n\prod_{i\in L}x_i\right)=0.
\]
Setting $x_n=1$, we obtain
\[
\sum_{L\in Q}b_L\prod_{i\in L}x_i=0.
\]
It is not difficult to see that the polynomials $\prod_{i\in L}x_i$, $L\in Q$, are linearly independent. Therefore, we conclude that $b_L=0$ for each $L\in Q$.

By Claims 1-3, we now have
\[
\sum_{I\in W}u_Ig_I(x)=0.
\]
Thus it is sufficient to prove $g_I$'s are linearly independent.

Let $N$ be a positive integer and $H=\{h_1, h_2, \ldots, h_u\}$ be a subset of $[N]$ with all the elements being arranged in increasing order.
 We say $H$ {\it has a  gap} of size $\geq g$ if either $h_1\geq g-1, N-h_u\geq g-1$, or $h_{i+1}-h_{i}\geq g$ for some $i$ ($1\leq i\leq u-1$).  The following result obtained by Alon, Babai and Suzuki \cite{ABS} is critical to our proof.

\begin{lem}\label{LEM}
Let $H$ be a subset of $\{0,1,\ldots,p-1\}$. Let $p(x)$ denote the polynomial function defined by $p(x)=\prod_{h\in H}(x_1+x_2+\cdots+x_N-h)$. If the set $(H+p\mathbb{Z})\cap [N]$ has a gap $\geq g+1$, where $g$ is a positive integer, then the set of polynomials $\{p_I(x):|I|\leq g-1, I\in {N}\}$ is linearly independent over $\mathbb{F}_p$, where $p_I(x)=p(x)\prod_{i\in I}x_i$.
\end{lem}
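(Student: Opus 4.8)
The plan is to prove the lemma by the standard ``factor out the common polynomial and count weight levels'' strategy. Suppose we have a dependence $\sum_{|I|\le g-1} c_I p_I(x)=0$ as a function on $\{0,1\}^N$. Since every $p_I(x)=p(x)\prod_{i\in I}x_i$ carries the common factor $p(x)$, I would first rewrite the dependence as $p(x)P(x)=0$ on $\{0,1\}^N$, where $P(x)=\sum_{|I|\le g-1}c_I\prod_{i\in I}x_i$ is a multilinear polynomial of degree at most $g-1$. The goal then reduces to showing $P\equiv 0$: because the monomials $\prod_{i\in I}x_i$ with $|I|\le g-1$ are linearly independent as functions on $\{0,1\}^N$, the identity $P\equiv 0$ forces every $c_I=0$, which is exactly the claimed linear independence.

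Next I would exploit the gap hypothesis to locate enough points where $p$ does not vanish. For a $0/1$ vector $v$ of Hamming weight $w=|v|$ one has $p(v)=\prod_{h\in H}(w-h)$, which is nonzero in $\mathbb{F}_p$ precisely when $w\not\equiv h\pmod p$ for every $h\in H$, i.e.\ when $w\notin H+p\mathbb{Z}$. A gap of size $\ge g+1$ in $(H+p\mathbb{Z})\cap[N]$ means there are $g$ consecutive integers $w_0,w_0+1,\dots,w_0+g-1$ inside $\{0,1,\dots,N\}$ that avoid $H+p\mathbb{Z}$ (the off-by-one built into the definition of a ``gap'' is exactly what allows the boundary weights $0$ and $N$ to be counted). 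Hence $p(v)\ne 0$ for every $v$ whose weight lies in this range, and since $\mathbb{F}_p$ is a field and $p(v)P(v)=0$, the polynomial $P$ must vanish on all $0/1$ vectors of these $g$ consecutive weight levels.

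The heart of the matter, and the step I expect to be the main obstacle, is the following purely combinatorial claim: a multilinear polynomial of degree at most $d$ in $N$ variables that vanishes on all $0/1$ vectors of some $d+1$ consecutive weight levels is identically zero. Applied with $d=g-1$ this finishes the proof, since $P$ then vanishes on $g=(g-1)+1$ consecutive levels. I would establish the claim by induction on $N$, splitting off the last variable: write $P=P_0+x_NP_1$ with $P_0=P|_{x_N=0}$ (degree $\le d$) and $\deg P_1\le d-1$. Evaluating $P$ on vectors $(v',0)$ shows $P_0$ vanishes on the levels $\{w_0,\dots,w_0+d\}$, while evaluating on $(v',1)$ shows $P_0+P_1$ vanishes on $\{w_0-1,\dots,w_0+d-1\}$; subtracting on the overlapping $d$ levels $\{w_0,\dots,w_0+d-1\}$ forces $P_1$ to vanish there, so $P_1\equiv 0$ by induction. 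Then $P=P_0$ no longer involves $x_N$, and combining the two families of evaluations (now with $P_1\equiv 0$) shows that $P_0$ vanishes on at least $d+1$ consecutive levels among $(N-1)$-variable vectors, so $P_0\equiv 0$ by induction as well, whence $P\equiv 0$. The delicate points will be the boundary bookkeeping when $w_0=0$ or $w_0+d=N$, so that a required weight level falls outside the achievable range for $N-1$ variables, and verifying that the inductive hypothesis always retains the invariant ``degree $\le d'$ together with $\ge d'+1$ consecutive vanishing levels''; once these are handled, $P\equiv 0$ and every $c_I=0$, which proves the asserted linear independence over $\mathbb{F}_p$.
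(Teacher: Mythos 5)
The paper does not actually prove this lemma---it is quoted from Alon, Babai and Suzuki \cite{ABS} and used as a black box---so there is no in-paper argument to compare against. Your proof is correct and is essentially the standard one: factor the putative dependence as $p(x)P(x)=0$ on $\{0,1\}^N$ with $P$ multilinear of degree at most $g-1$, use the gap to locate $g$ consecutive weight levels on which $p$ is nonzero (hence $P$ vanishes there), and invoke the fact that a multilinear polynomial of degree at most $d$ vanishing on $d+1$ consecutive weight levels is identically zero. Your inductive proof of that last claim (splitting off $x_N$, obtaining $d$ overlapping levels for $P_1$ of degree $\le d-1$ and at least $d+1$ levels for $P_0$) is characteristic-free, and the boundary bookkeeping you flag does check out, so the argument is complete.

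One caveat, which is really a defect of the statement as transcribed here rather than of your argument: you assert that a gap of size $\ge g+1$ in $(H+p\mathbb{Z})\cap[N]$ always yields $g$ consecutive integers in $\{0,1,\dots,N\}$ avoiding $H+p\mathbb{Z}$. For an internal gap, or a gap at the top ($N-h_u\ge g$), this is right; but for a gap at the bottom ($h_1\ge g$) the weight $0$ gets counted even though $0$ may lie in $H+p\mathbb{Z}$ (namely when $0\in H$), because $0\notin[N]=\{1,\dots,N\}$ by convention. In that situation only $g-1$ usable levels survive, and the lemma as literally stated can fail: for $H=\{0\}$, $N=p$, $g=p$, the multilinearization of $\prod_{j=1}^{p-1}\bigl(\sum_i x_i-j\bigr)$ yields a nontrivial dependence among the $p_I$ with $|I|\le p-1$. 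The original formulation intersects $H+p\mathbb{Z}$ with $\{0,1,\dots,N\}$, which removes the issue, and in this paper's application (Cases 1--3) the gaps invoked are never the bottom one, so nothing downstream is affected; you should simply require that your $g$ consecutive weights avoid $H+p\mathbb{Z}$ itself, not merely $(H+p\mathbb{Z})\cap[N]$.
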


To apply Lemma~\ref{LEM}, we define the set $H$ as follows: $H=(K\cup(K-1)+p\mathbb{Z})\cap[n-1]$. We can divide $n-1$ into the the following four cases:
\begin{enumerate}
  \item $s+k_r-1\leq n-1<p+k_1-1$;
  \item $s+k_r-1<p+k_1-1\leq n-1$;
  \item $(s-2r+1)+k_r<p+k_1-1\leq s+k_r-1\leq n-1$;
  \item $p+k_1-1\leq (s-2r+1)+k_r\leq s+k_r-1\leq n-1$.
\end{enumerate}

\noindent {\bf Case 1:} $s+k_r-1\leq n-1<p+k_1-1$.

Since $n-1<p+k_1-1$, the set $H$ consists of only $\{k_1-1,k_1,\ldots,k_r\}$. From $s+k_r-1\leq n-1$, we obtain $n-1-k_r\geq s-1 \geq s-2r+1$. By the definition of the gap, $H$ has a gap $\geq s-2r+2$.

\noindent {\bf Case 2:} $s+k_r-1<p+k_1-1\leq n-1$.

Since $n-1\geq p+k_1-1$, the set $H$ contains at least the following elements $\{k_1-1,k_1,\ldots,k_r,p+k_1-1\}$. From $s+k_r-1<p+k_1-1$, we derive $(p+k_1-1)-k_r\geq s\geq s-2r+2$. Thus, $H$ has a gap $\geq s-2r+2$.

\noindent {\bf Case 3:} $(s-2r+1)+k_r<p+k_1-1\leq s+k_r-1\leq n-1$.

Since $n-1\geq p+k_1-1$, $H$ contains at least the following elements $\{k_1-1,k_1,\ldots,k_r,p+k_1-1\}$. Since $(s-2r+1)+k_r<p+k_1-1$, we have $(p+k_1-1)-k_r>s-2r+1$. Then $H$ has a gap $\geq s-2r+2$.

By applying Lemma \ref{LEM}, we conclude that the set of polynomials $\{g_I(x):I\in W\}$ is linearly independent over $\mathbb{F}_p$, and so $u_I=0$ for each $I\in W$.

In summary, for the Cases $1$--$3$, we have shown that the polynomials in
\[
\{f_{A_i(x)}|1\leq i\leq m\}\cup\{q_L(x)|L\in Q\}\cup\{g_I(x)|I\in W\}
\]
are linearly independent over the field $\mathbb{F}_p$. Since the set of all monomials in variables $x_1, x_2,\ldots, x_n$ of degree at most $s$ forms a basis for the vector space of multilinear polynomials of degree at most $s$, it follows that
\[
|\A|+\sum_{i=0}^{s-1}{n-1\choose i}+\sum_{i=0}^{s-2r}{n-1\choose i}\leq\sum_{i=0}^{s}{n\choose i},
\]
which implies that
\[
|\A|\leq{n-1\choose s}+{n-1\choose s-1}+\cdots+{n-1\choose s-2r+1}.
\]
This completes the proof of the theorem for the Cases $1$--$3$.

Since Theorem \ref{main} has shown that the statement of Theorem \ref{cor} remains true under the condition $n\geq 2s-2r+1$, we just consider $n\leq 2s-2r$ for the Case 4. The following argument is similar to the technique Hwang and Kim used for the proof of Alon-Babai-Suzuki's conjecture.

Since $p+k_1-1\leq(s-2r+1)+k_r\leq s+k_r-1\leq n-1\leq2s-2r-1$, we obtain $k_r\leq s-2r$. Thus, we have $r+s\leq p \leq s-2r+2+k_r-k_1\leq 2s-4r+1$. This implies $s\geq 5r-1$. Since $n\leq 2s-2r<2p$, we have $|A_i|\in (K+p\mathbb{Z})\cap [n]=\{k_1,k_2,\ldots,k_r,p+k_1,\ldots,p+k_{c}\}$ for some  $1\leq c\leq r$. This gives
\[
|\A|\leq{n\choose k_1}+{n\choose k_2}+\cdots+{n\choose k_r}+{n\choose p+k_1}+\cdots+{n\choose p+k_c}.
\]
We will show that the right hand side of the above inequality is less than or equal to ${n-1\choose s}+{n-1\choose s-1}+\ldots+{n-1\choose s-2r+1}={n\choose s}+{n\choose s-2}+\ldots+{n\choose s-2r+2}$. Since $s+r+k_1-1\leq p+k_1-1\leq (s-2r+1)+k_r$, we have $k_r\geq 3r-2+k_1$. Let $n=2s-2r-\delta$ for integer $\delta$, where $0\leq \delta\leq s-5r+1$, since $2s-2r\geq n\geq s+k_r\geq s+3r-2+k_1$. Since the sequence $\{{n\choose k}\}$ is unimodal and symmetric around $n/2$, we have $|s-n/2|=r+\delta/2>r-\delta/2-2=|n/2-(s-2r+2)|$.

Therefore we have
\begin{eqnarray}\label{min}
\min\left[{n\choose s},{n\choose s-2},\ldots,{n\choose s-2r+2}\right]={n\choose s}.
\end{eqnarray}

Since $n=2s-2r-\delta\geq p+k_c\geq r+s+k_c$, we have $k_c\leq s-3r-\delta$. For $1\leq i\leq c$, $k_i$ can be written as $k_i=s-3r-\delta-a_i$, where $0< a_i\leq s-3r-\delta$. Thus, we have $p+k_i\geq r+s+k_i= 2s-2r-\delta-a_i$ where $1\leq i\leq c$. Since $2s-2r-\delta-a_i\geq s+r>n/2$, we have
\[
\sum_{i=1}^{c}\left({n\choose k_i}+{n\choose p+k_i}\right)\leq\sum_{i=1}^{c}\left({n\choose s-3r-\delta-a_i}+{n\choose 2s-2r-\delta-a_i}\right).
\]

For $c+1\leq i\leq r$, we derive $k_i\leq k_r< s-2r-\delta<n/2$. Noting that $|s-n/2|=r+\delta/2=|n/2-(s-2r-\delta)|$, we have ${n\choose k_i}\leq{n\choose s}$ for all $c+1\leq i\leq r$. Then
\begin{align*}
|\A|&\leq\sum_{i=1}^{c}\left({n\choose k_i}+{n\choose p+k_i}\right)+\sum_{i=c+1}^{r}{n\choose k_i}\\
&\leq\sum_{i=1}^{c}\left({n\choose s-3r-\delta-a_i}+{n\choose 2s-2r-\delta-a_i}\right)+(r-c){n\choose s}.
\end{align*}
With the help of the next lemma, we can complete our proof.

\begin{lem}{\rm \cite{HK}}\label{Kimequality}
For all $0\leq c<k\leq n/2$, we have
\[
{n\choose k-1-c}+{n\choose c}\leq{n\choose k}.
\]
\end{lem}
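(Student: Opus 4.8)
The plan is to prove this purely binomial inequality by induction on $n$, using Pascal's rule to split the target $\binom{n}{k}$ into two copies of a smaller central binomial coefficient. Since the quantity $\binom{n}{c}+\binom{n}{k-1-c}$ is symmetric under $c\mapsto k-1-c$, I would first reduce to the case $c\le(k-1)/2$, so that both indices $c$ and $k-1-c$ lie in $\{0,1,\dots,k-1\}$ with $c$ the smaller one.

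The base of the induction is the case $c=0$, which reads $1+\binom{n}{k-1}\le\binom{n}{k}$, i.e. $\binom{n}{k}-\binom{n}{k-1}\ge 1$. This is where the hypothesis $k\le n/2$ is genuinely used: it forces $\binom{n}{k}>\binom{n}{k-1}$ by the strict unimodality of binomial coefficients on the first half, and since both are integers their difference is at least $1$. (For the few small values of $n$ admitting no instance with $c\ge 1$, the statement is then already established.)

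For the inductive step, assuming $c\ge 1$ (so that $k\ge 3$ and $c\le k-2$), I would apply Pascal's rule to each of the three binomial coefficients and regroup the four resulting terms on the left as
\[
\left[\binom{n-1}{c}+\binom{n-1}{k-2-c}\right]+\left[\binom{n-1}{c-1}+\binom{n-1}{k-1-c}\right].
\]
Each bracket is exactly the left-hand side of the lemma for the parameters $(n-1,k-1,c)$ and $(n-1,k-1,c-1)$ respectively; both are valid instances because $k-1\le(n-1)/2$ and the relevant index constraints ($0\le c<k-1$ and $0\le c-1<k-1$) hold. By the induction hypothesis each bracket is at most $\binom{n-1}{k-1}$, so the left-hand side is at most $2\binom{n-1}{k-1}$.

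It then remains to check $2\binom{n-1}{k-1}\le\binom{n}{k}$. Writing $\binom{n}{k}=\binom{n-1}{k}+\binom{n-1}{k-1}$ via Pascal again, this is equivalent to $\binom{n-1}{k-1}\le\binom{n-1}{k}$, which holds precisely when $k\le n/2$ — exactly the standing hypothesis, with equality at $k=n/2$. The main thing to get right is this bookkeeping: verifying that the two Pascal halves are bona fide smaller instances (this is what the symmetry reduction and the bound $k-1\le(n-1)/2$ guarantee) and that the final comparison $2\binom{n-1}{k-1}\le\binom{n}{k}$ is \emph{controlled by}, rather than stronger than, the hypothesis $k\le n/2$. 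Notably, no delicate estimate on the sizes of binomial coefficients is needed beyond the integrality fact used in the base case.
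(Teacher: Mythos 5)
Your argument is correct. Note, however, that the paper itself does not prove this lemma at all: it is imported verbatim from Hwang and Kim \cite{HK}, so there is no in-paper proof to compare against. Your double application of Pascal's rule is a clean, self-contained induction on $n$: the regrouping of the four terms into the two brackets $\bigl[\binom{n-1}{c}+\binom{n-1}{k-2-c}\bigr]$ and $\bigl[\binom{n-1}{c-1}+\binom{n-1}{k-1-c}\bigr]$ does produce exactly the instances $(n-1,k-1,c)$ and $(n-1,k-1,c-1)$ of the lemma, the constraint $k-1\le (n-1)/2$ follows from $2k\le n$, the symmetry reduction $c\le (k-1)/2$ legitimately supplies $c\le k-2$ so that both bracket instances are admissible, and the closing step $2\binom{n-1}{k-1}\le\binom{n}{k}$ is equivalent to $\binom{n-1}{k-1}\le\binom{n-1}{k}$, i.e.\ precisely to $n\ge 2k$. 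The base case $c=0$ correctly isolates where integrality is needed ($\binom{n}{k}>\binom{n}{k-1}$ for $k\le n/2$ forces a gap of at least $1$), and the recursion terminates since $c$ is nonincreasing while $n$ decreases. Compared with the direct unimodality estimates one would find in \cite{HK}, your route trades explicit ratio computations for bookkeeping of parameter constraints; both are elementary, and yours has the merit of making the role of the hypothesis $k\le n/2$ appear exactly twice, once in the base case and once in the final Pascal comparison.
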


Let $k=n-s=s-2r-\delta<n/2$, apply Lemma~\ref{Kimequality}.
For every $0\leq a\leq s-3r-\delta<k$, we have
\begin{align*}
{n\choose s-3r-\delta-a}&+{n\choose 2s-2r-\delta-a}\\
=&{n\choose n-s-r-a}+{n\choose n-a}\\
=&{n\choose k-r-a}+{n\choose a}\\
\leq&{n\choose k-1-a}+{n\choose a}\\
\leq&{n\choose k}={n\choose s}.
\end{align*}

We now finish the proof of Theorem \ref{cor} for the Case 4.
\[
|\A|\leq\sum_{i=1}^{c}\left({n\choose s-3r-\delta-a_i}+{n\choose 2s-2r-\delta-a_i}\right)+(r-c){n\choose s}\leq r{n\choose s}.
\]
By (\ref{min}), we have
\[
|\A|\leq{n\choose s}+{n\choose s-2}+\cdots+{n\choose s-2r+2}={n-1\choose s}+{n-1\choose s-1}+\cdots+{n-1\choose s-2r+1}.
\]

\subsection*{Acknowledgements}

The research of G. Ge was supported by the National Natural Science Foundation of China under Grant Nos. 61171198, 11431003 and 61571310, and the
Importation and Development of High-Caliber Talents Project of Beijing Municipal Institutions.

\end{document}